\newtheorem{lemma}{Lemma}
\newtheorem{theorem}{Theorem} 
\newtheorem{corollary}{Corollary}
\newtheorem{claim}{Claim}
\date{}
\newcounter{num}
\title{Sum-distinguishing number of sparse hypergraphs}
\author{Maria Axenovich\thanks{Department of Mathematics, Karlsruhe Institute of Technology, Germany},  ~Yair Caro\thanks{Department of Mathematics, University of Haifa-Oranim, Israel}, and Raphael Yuster \thanks{Department of Mathematics, University of Haifa, Israel}}
\begin{document}
\maketitle

\begin{abstract}

A vertex labeling of a hypergraph is {\em sum distinguishing} if it uses positive integers and the sums of labels taken over the distinct hyperedges are distinct.
Let $s(H)$ be the smallest integer $N$ such that there is a sum-distinguishing labeling of $H$ with each label at most $N$. The largest value of $s(H)$ over all hypergraphs on $n$ vertices and $m$ hyperedges is denoted $s(n,m)$.
We prove that $s(n,m)$ is almost-quadratic in $m$ as long as $m$ is not too large.
More precisely, the following holds: If $n \le m \le n^{O(1)}$ then
$$
s(n,m)=\frac{m^2}{w(m)},
$$
where $w(m)$ is a function that goes to infinity and is smaller than any polynomial in $m$.

The parameter $s(n,m)$ has close connections to several other graph and hypergraph functions, such as the irregularity strength of hypergraphs. Our result has several applications, notably:
\begin{itemize}
	\item
	We answer a question of Gy\'arf\'as et al. whether there are $n$-vertex hypergraphs with irregularity strength greater than $2n$. In fact we show that there are $n$-vertex hypergraphs with irregularity strength at least $n^{2-o(1)}$.
	\item
	In addition, our results imply that
	$s^*(n)=n^2/w(n)$ where $s^*(n)$ is the distinguishing closed-neighborhood number, i.e., the smallest integer $N$ such that any $n$-vertex graph allows for a vertex labeling with positive integers at most $N$ so that the sums of labels on distinct closed neighborhoods of vertices are distinct.
\end{itemize}

\end{abstract}

\section{Introduction}

%\begin{linenumbers}

For a hypergraph $H=(V,E)$, we say that a labeling $f: V\rightarrow \mathbb{N}$ is {\it   sum-distinguishing}  or simply {\it distinguishing}  if $s(e)\neq s(e')$ for any two distinct hyperedges $e, e'\in E$, where $s(e) = \sum_{v\in e} f(v)$. Let $s(H)$ be the smallest integer $N$ such that there is a distinguishing labeling of $H$ with each label at most $N$.
Note that $s(H)$ is well-defined by assigning vertex labels equal to distinct powers of $2$.
Distinguishing labelings can be viewed as number-theoretic constructions extending Sidon sets to non-complete, non-uniform hypergraphs.
Using common notation, a $B_h[1]$-Sidon set is a set $X$ of integers such that for any integer $q$, there is at most one subset $X'$ of $X$, $|X'|=h$,  so that the sum of elements from $X'$ is $q$. So, a $B_h[1]$-Sidon set corresponds to a distinguishing labeling of a complete $h$-uniform hypergraph.  On the other hand, distinguishing labelings of hypergraphs are closely connected to several ``distinguishing'' type parameters of graphs and hypergraphs that we discuss in more detail later. Let 
$$
s(n,m)=\max\{ s(H)\,:\,   |V(H)|=n, |E(H)|=m\}\;.
$$
Namely, $s(n,m)$ is the largest value of $s(H)$ over all hypergraphs on $n$ vertices and $m$ hyperedges.
Observe first that for the largest possible value of $m$, namely $m=2^n-1$ (corresponding to the full hypergraph consisting of all possible hyperedges), it trivially holds that $s(n,2^n-1)=2^{n-1}$.
So, in particular, we have that $s(n,m)$ is linear in the number of edges whenever $m=\Theta(2^n)$.
On the other hand, for general $m$, a standard probabilistic argument shows that $s(n,m)=O(m^2)$. So, it seems of interest to study the dependence of $s(n,m)$ on $m$ whenever the hypergraph is relatively sparse.
Our main result does just that. We prove, perhaps surprisingly, that for hypergraphs with polynomially many edges, $s(n,m)$ is neither linear nor quadratic. In fact, we prove that in this regime, $s(n,m)$ is almost-quadratic in $m$.
\begin{theorem}\label{t:snm}
If $n \le m \le n^{O(1)}$ then
$$
s(n,m)=\frac{m^2}{w(m)}\;,
$$
where $w(m)$ is a function that goes to infinity and is smaller than any polynomial in $m$.
More formally, for any $C>0$, $\epsilon > 0$, there is $n_0$ such that for any $n>n_0$, and any $m$ satisfying $n\leq m \leq n^{1/\epsilon}$, we have that $m^{2-\epsilon} \leq s(n,m) \leq m^2/C$.
\end{theorem}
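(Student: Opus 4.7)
For any $C,\epsilon>0$, the theorem requires proving both $s(n,m)\le m^2/C$ and $s(n,m)\ge m^{2-\epsilon}$ throughout the range $n\le m\le n^{1/\epsilon}$. The upper bound sharpens the routine probabilistic estimate $s(n,m)=O(m^2)$ already noted in the introduction, while the lower bound is the main substantive contribution.

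For the upper bound, my plan is a probabilistic-plus-alteration argument. Sample $f\colon V\to[N]$ uniformly and independently with $N=m^2/w(m)$. For any pair of distinct edges $e,e'$, the difference $s(e)-s(e')$ is a signed sum of $|e\triangle e'|$ independent uniform variables, so by a local-central-limit / Erd\H{o}s--Littlewood--Offord estimate, $\Pr[s(e)=s(e')]=O(1/(N\sqrt{|e\triangle e'|}))$. Summing over pairs gives $O(w(m))$ expected collisions, and I would eliminate the remaining collisions by reassigning the labels at a few vertices to fresh values within a slightly enlarged range, choosing the fresh values deterministically so that reassignments do not cascade. Taking $w(m)\to\infty$ slowly (slower than any polynomial) keeps the final maximum label at most $m^2/w(m)$.

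For the lower bound, my plan is a counting argument. Fix $N=\lfloor m^{2-\epsilon}\rfloor$ and, for each labeling $f\colon V\to[N]$, let $t_f(s)=|\{E\subseteq V\colon \sum_{v\in E}f(v)=s\}|$; the number of $m$-edge hypergraphs that $f$ distinguishes is $D_f=e_m\bigl((t_f(s))_s\bigr)$, the $m$-th elementary symmetric polynomial in the multiplicities. Pigeonhole produces an $H$ with $s(H)>N$ provided $\sum_{f\in [N]^n}D_f$ is strictly less than the total number of $m$-edge hypergraphs on $[n]$. Maclaurin's inequality gives $D_f\le\binom{T}{m}(2^n/T)^m$ with $T\le nN$, but this is not sharp enough --- it loses roughly a factor of $N^n$. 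To close the gap I would (i) restrict to $k$-uniform hypergraphs for an optimized $k$, so that the comparison becomes $\sum_f D_f^{(k)}<\binom{\binom{n}{k}}{m}$ over a much smaller sum space, and (ii) invoke anti-concentration of $k$-subset sums of $[N]$-valued labels to cap $t_f(s)=O(\binom{n}{k}/(N\sqrt{k}))$ uniformly over ``spread'' labelings, treating labelings with concentrated image separately by a direct count on the small image.

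The hard part is precisely the $D_f$ estimate in the lower-bound argument. The union over labelings contributes the enormous factor $N^n$, so the bound on $D_f$ has to be exponentially smaller in $m$ than Maclaurin's inequality provides. The needed improvement must come from additive-combinatorial properties of the subset-sum map $E\mapsto\sum_{v\in E}f(v)$ rather than from the elementary symmetric polynomial alone; I anticipate a case split on whether $f(V)\subseteq[N]$ is spread (controlled by anti-concentration of $k$-subset sums) or concentrated (controlled by a direct combinatorial count), together with an optimization over $k$, the spread/concentrated threshold, and the various slack parameters that together land at exactly the exponent $m^{2-\epsilon}$.
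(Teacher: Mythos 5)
Both halves of your plan contain a genuine gap. For the lower bound, the counting framework itself is workable (the paper runs essentially the same first-moment computation, in the binomial random-hypergraph model, following Bollob\'as--Pikhurko), but as you have set it up it cannot close, and the obstruction is not the sharpness of the $D_f$ estimate. Even with the best possible bound $D_f/\binom{\binom{n}{k}}{m}\approx e^{-m^2/(2kN)}$ --- a birthday-paradox gain coming from the fact that all $k$-subset sums are \emph{confined} to a range of size at most $kN$, which is a pigeonhole phenomenon rather than the anti-concentration cap on $t_f(s)$ you propose; indeed the balanced ``spread'' labelings are the worst case, and capping multiplicities buys nothing beyond the birthday bound --- you would need $e^{-m^{\epsilon}/(2k)}<N^{-n}$, i.e.\ $m^{\epsilon}\gtrsim kn\log m$, which fails throughout the range $n\le m\le n^{1/\epsilon}$ since there $m^{\epsilon}\le n$. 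The missing idea is to build the extremal hypergraph on a vertex support of size $\nu\approx m^{2/(k+1)}\ll n$ and pad with isolated vertices, so that the union bound runs over $N^{\nu}$ labelings rather than $N^{n}$; the competing requirement that the number of sum-classes $kN$ be at most half of $\binom{\nu}{k}$ then forces $k>2/\epsilon-1$, which is exactly where the exponent $2-\epsilon$ comes from. Without shrinking the support, no choice of $k$ or of your spread/concentrated threshold rescues the count.

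For the upper bound, the union bound with an Erd\H{o}s--Littlewood--Offord estimate does dispose of all pairs whose symmetric difference exceeds a constant $K=K(C)$ (there the collision probability drops below $1/(CN)$ and summing over $\binom{m}{2}$ pairs already gives less than $1$), but the pairs with $|e\triangle e'|\le K$ can number $\Theta(m^2)$ and each collides with probability $\Theta(1/N)$, leaving an expected $\Theta(w(m))\to\infty$ collisions; your repair step is where the argument breaks. To fix a collision on $e,e'$ you must relabel some $y\in e\triangle e'$, and the fresh label must avoid, for every edge $g\ni y$ and every other edge $g'$, the unique value making $s(g)=s(g')$: that is up to $\deg(y)\cdot m$ forbidden values, which exceeds $N=m^2/w(m)$ as soon as $y$ has degree $\Omega(m/w(m))$, so a valid fresh value need not exist and ``reassignments do not cascade'' is precisely the unproved claim. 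The paper avoids repair entirely: it first isolates the (constantly many) ``dangerously popular'' vertices lying in many small symmetric differences and labels them with certain guarantees, and then applies the Lov\'asz Local Lemma to the remaining small-symmetric-difference pairs, each of whose bad events depends on at most $m^2/K^2$ others once the popular vertices are removed, together with an explicit anti-concentration lemma in place of your ELO estimate. Some mechanism of this kind for the many low-probability, heavily overlapping bad events is required; deterministic local alteration does not supply it.
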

The upper bound in the proof of Theorem \ref{t:snm} relies on several probabilistic arguments, some of which are rather delicate. For the lower bound, we extend an approach of Bollob\'as and Pikhurko \cite{BP} used for $2$-uniform hypergraphs (i.e. graphs) and their distinguishing labelings.

Our main result has several applications that we next describe.
Our first application is to the problem of distinguishing the vertices of a graph by sums of labels on closed neighborhoods. For a graph $G=(V,E)$, and a vertex $v\in V(G)$, the open neighborhood of $v $ is
$N(v)=\{u\in V(G)\,:\,  uv\in E(G)\}$; the  closed neighborhood of $v$ is  $N[v]=\{v\} \cup N(v)$.
For a vertex labeling $f$ of $G$ and $v\in V(G)$, let $s^*(v) =s^*_f(v)= \sum_{x\in N[v]}  f(x)$.
The labeling $f$ is  called {\it vertex sum-distinguishing} if it uses positive integers and 
$s^*(v)\neq s^*(u)$ for any $u, v\in V(G)$ such that $N[u] \neq N[v]$.
Let $s^*(G)$ be the smallest integer $k$ such that there is a vertex sum-distinguishing labeling of $G$ with a largest label $k$ and let $s^*(n)$ be the maximum of $s^*(G)$ taken over all graphs with $n$ vertices.

Let $s(n)=s(n,n)$. We observe that the parameters $s(n)$ and $s^*(n)$ are closely connected.  
Indeed, for a graph $G=(V, E)$ consider a hypergraph $H=H_G$ on a  vertex set $V$ with hyperedges corresponding to the closed neighbourhoods of vertices of $G$. We see that $s^*(G)  = s(H)$. Note that the number of hyperedges in $H_G$ is at most $n$. The following result is an immediate consequence of Theorem \ref{t:snm}
and Lemma \ref{l:equivalence}, in which we prove that $s(n/2) \leq s^*(n) \leq s(n)$.
Thus, we obtain:
\begin{corollary}\label{c:max-graphs} We have that 
	$$
	s^*(n)=\frac{n^2}{w(n)}\;,
	$$
	where $w(n)$ is a function that goes to infinity and is smaller than any polynomial in $n$. More formally, for any $C>0$, $\epsilon > 0$, there is $n_0$ such that for any $n>n_0$,
	$n^{2-\epsilon} \leq s^*(n) \leq n^2/C$. 
\end{corollary}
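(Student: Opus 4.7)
The plan is to deduce Corollary \ref{c:max-graphs} directly from Theorem \ref{t:snm} by applying the latter at two different parameter choices and sandwiching $s^*(n)$ via the inequality $s(\lfloor n/2 \rfloor) \leq s^*(n) \leq s(n)$ supplied by Lemma \ref{l:equivalence}. Both endpoints of the sandwich correspond to the diagonal case $m=n$ of $s(n,m)$, and both trivially satisfy the hypothesis $n \leq m \leq n^{O(1)}$ of Theorem \ref{t:snm}, so no compatibility issue arises.

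For the upper bound, fix $C>0$. Lemma \ref{l:equivalence} gives $s^*(n) \leq s(n) = s(n,n)$, and Theorem \ref{t:snm} applied with $m=n$ yields $s(n,n) \leq n^2/C$ for all $n$ above some threshold $n_0(C)$; combining these delivers $s^*(n) \leq n^2/C$. For the lower bound, fix $\epsilon > 0$ and choose $\epsilon' \in (0,\epsilon)$. Writing $n' = \lfloor n/2 \rfloor$ and invoking Theorem \ref{t:snm} on the pair $(n',n')$ with the exponent $\epsilon'$ in place of $\epsilon$ gives $s(n') \geq n'^{\,2-\epsilon'}$ for $n'$ large. Then $s^*(n) \geq s(n') \geq \lfloor n/2 \rfloor^{2-\epsilon'}$, and since $\epsilon' < \epsilon$ the polynomial slack $n^{\epsilon - \epsilon'}$ absorbs both the constant factor $2^{2-\epsilon'}$ from halving and the loss from the floor, so $s^*(n) \geq n^{2-\epsilon}$ for all sufficiently large $n$.

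Since Theorem \ref{t:snm} and Lemma \ref{l:equivalence} are available, the corollary is essentially bookkeeping and presents no real obstacle. The only moment requiring a little care is the choice $\epsilon' < \epsilon$ when applying the theorem to $n'=\lfloor n/2 \rfloor$, so that the $n^{2-\epsilon}$ target can absorb the constant-factor loss of replacing $n$ by $\lfloor n/2 \rfloor$. All the genuine content lives in Theorem \ref{t:snm} itself (its probabilistic upper bound and the Bollob\'as--Pikhurko-style lower bound) and in the gadget construction that underlies Lemma \ref{l:equivalence}, namely the reduction that turns a distinguishing labeling of an $n$-vertex graph's closed-neighborhood hypergraph into a distinguishing labeling of a hypergraph on roughly half as many vertices (and vice versa).
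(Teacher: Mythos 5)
Your proposal is correct and follows exactly the route the paper intends: sandwiching $s^*(n)$ between $s(\lfloor n/2\rfloor)$ and $s(n)$ via Lemma \ref{l:equivalence} and then applying Theorem \ref{t:snm} on the diagonal $m=n$ (resp.\ $m=n'$) with a slightly smaller $\epsilon'$ to absorb the constant-factor loss from halving. The paper states the corollary as an immediate consequence without writing out this bookkeeping, and your write-up supplies precisely the intended details.
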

The proof of Theorem \ref{t:snm} (and hence Corollary \ref{c:max-graphs})  yields an efficient randomized algorithm for finding a corresponding labeling. For graphs with given maximum and minimum degrees, we provide a more specific result which also yields an efficient deterministic algorithm.
\begin{theorem}\label{t:Delta}
Let $G$ be a nonempty $n$-vertex graph with maximum degree $\Delta$, minimum degree $\delta$, and the largest number of vertices with pairwise distinct closed neighborhoods equal to  $n'$. Let $d(v)$ denote the degree of $v$. 
Then
$$
\frac{n'+ \delta}{\Delta +1} \leq s^*(G) \leq  \max \{(n-d(v)-1)(d(v)+1)+2\,:\,v\in V(G)\}\leq (\Delta+1) n\;.
$$
\end{theorem}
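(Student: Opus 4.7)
For the lower bound, I would run a direct pigeonhole count. Let $u_1, \ldots, u_{n'}$ be vertices of $G$ with pairwise distinct closed neighborhoods. In any sum-distinguishing labeling $f$ with largest label $N = s^*(G)$, the sums $s^*(u_1), \ldots, s^*(u_{n'})$ are pairwise distinct positive integers. Because $|N[u_i]| \ge \delta+1$ and every label is at most $N$, each $s^*(u_i)$ lies in $[\delta+1, (\Delta+1)N]$. The number of integers in that interval must be at least $n'$, i.e.\ $(\Delta+1)N - \delta \ge n'$, which rearranges to the claimed lower bound.

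For the upper bound, I would fix an arbitrary vertex $v^* \in V(G)$ with degree $d$, set $M = (n-d-1)(d+1)+2$, and construct a distinguishing labeling $f\colon V \to \{1, \ldots, M\}$. The central observation is a clean split of the distinguishing constraints. For each pair $(u, u')$ with $N[u] \ne N[u']$, we have $s^*(u) - s^*(u') = \sum_{x \in N[u] \triangle N[u']} \pm f(x)$, and the coefficient of $f(v^*)$ in this difference is $\pm 1$ exactly when $v^* \in N[u] \triangle N[u']$---that is, when exactly one of $u, u'$ lies in $N[v^*]$. I would call such pairs \emph{vertical} (the rest \emph{horizontal}). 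The plan is two-phase: label $V \setminus \{v^*\}$ to satisfy all horizontal constraints, then pick $f(v^*) \in \{1, \ldots, M\}$ to satisfy all vertical ones. Phase~2 is easy: once the rest is fixed, each vertical pair forbids a single value of $f(v^*)$, and the total number of vertical pairs is at most $|N[v^*]| \cdot |V \setminus N[v^*]| = (d+1)(n-d-1) = M - 2$, so at least two admissible values of $f(v^*)$ remain.

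The main obstacle is Phase~1: assigning $f$ on the $n-1$ vertices of $V \setminus \{v^*\}$ so that all horizontal constraints hold while using labels in $\{1, \ldots, M\}$. Equivalently, we need a sum-distinguishing labeling of the auxiliary hypergraph on $V \setminus \{v^*\}$ whose hyperedges are the sets $N[u] \setminus \{v^*\}$ for $u \in V$. A direct appeal to Theorem~\ref{t:snm} is too crude, since its $O(n^2/w(n))$ bound can exceed $M$ when $d$ is extreme. To reach $M$ exactly I would use a structured two-layer labeling tailored to the partition $V = N[v^*] \sqcup (V \setminus N[v^*])$: write $f(u) = \alpha(u)(n-d-1) + \beta(u)$ with $\alpha(u) \in \{0, \ldots, d\}$ and $\beta(u) \in \{1, \ldots, n-d-1\}$, then choose $\alpha$ and $\beta$ so that the coarse $\alpha$-layer separates horizontal sums whose number of $N(v^*)$-contributions differs, while the fine $\beta$-layer resolves the remaining ties. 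A short case analysis, splitting on whether each of the two vertices of a horizontal pair lies in $N(v^*) \cup \{v^*\}$ or in $V \setminus N[v^*]$, would then verify distinguishability; the labeling is deterministic and runs in linear time, matching the theorem's algorithmic claim. Finally, the arithmetic bound $\max_v (n-d(v)-1)(d(v)+1)+2 \le (\Delta+1)n$ follows from $(n-d-1)(d+1) \le (n-1)(\Delta+1)$ together with $\Delta \ge 1$ (since $G$ is nonempty), which absorbs the additive $2$.
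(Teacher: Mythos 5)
Your lower bound is correct and is essentially the paper's argument: the $n'$ distinct sums are integers in $\{\delta+1,\ldots,(\Delta+1)N\}$, forcing $(\Delta+1)N-\delta\ge n'$. The closing arithmetic $(n-d-1)(d+1)+2\le(\Delta+1)n$ via $\Delta\ge 1$ is also fine.

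The upper bound, however, has a genuine gap: Phase~1 is never actually carried out, and it is the entire difficulty. You correctly observe that once $V\setminus\{v^*\}$ is labeled so that all ``horizontal'' pairs are distinguished, the at most $(d+1)(n-d-1)=M-2$ ``vertical'' pairs each forbid one value of $f(v^*)$, leaving an admissible choice. But you are left needing a sum-distinguishing labeling, with labels in $[M]$, of the hypergraph of truncated neighborhoods $N[u]\setminus\{v^*\}$ on $n-1$ vertices --- which is not the closed-neighborhood hypergraph of any graph, so no induction on the theorem is available, and which is essentially an instance of the same problem you started with. The proposed two-layer decomposition $f(u)=\alpha(u)(n-d-1)+\beta(u)$ is only a hope: the fine sums $\sum\beta$ over a neighborhood can be as large as $(\Delta+1)(n-d-1)$ and therefore spill across the coarse layers, neighborhoods with the same number of $N(v^*)$-contributions need not be separated by $\alpha$, and no case analysis is given. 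As stated, Phase~1 would fail or at least is entirely unsubstantiated.

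The paper avoids constructing the labeling in one shot. It takes a labeling $f:V\to[\xi]$ minimizing the number of \emph{bad} pairs (pairs $u,v$ with $N[u]\ne N[v]$ but equal sums), and shows that if a bad pair exists one can relabel a single vertex $x$ (a vertex of the symmetric difference $N[u]\triangle N[v]$, or $u$ itself if $u\not\sim v$) so that the pair becomes good while no good pair becomes bad: a new bad pair can only arise between some $y\in N[x]$ and $y'\notin N[x]$, and these forbid at most $(n-d(x)-1)(d(x)+1)\le\xi-2$ values of the new label, plus one more to force an actual change. This local-improvement argument uses exactly your ``vertical pair'' count, but applied to the vertex being relabeled at each step rather than to one globally fixed $v^*$, and the strict decrease in the number of bad pairs replaces your missing Phase~1. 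I recommend you adopt that mechanism; your Phase~2 counting then becomes the heart of the proof.
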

Observe that the upper bound of Theorem \ref{t:Delta} is weaker than the upper bound in Corollary \ref{c:max-graphs} whenever $\Delta=\Theta(n)$. In fact, it only gives $s^*(n)\le n^2/4+2$.
As a final result concerning $s^*$, we consider the case where the graph is a tree.
For a tree $T$ and its vertex $u$, let $L(u)$ be the set of leaves adjacent to $u$. Let
$L(T) = \max \{|L(u)|\,:\, u \in V(T)\}$.
\begin{theorem}\label{t:trees}
Let $T$ be a tree with $n\geq 3$ vertices. Then $s^*(T) \leq  2n-2-L(T)$, moreover this bound is tight for stars.  
\end{theorem}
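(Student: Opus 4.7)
My plan is to construct an explicit distinguishing vertex labeling of $T$ with labels in $\{1,\dots,2n-2-L(T)\}$, and then to observe tightness for stars.

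I would first record the structural fact that for $n\ge 3$ any two distinct vertices of a tree have distinct closed neighborhoods. Indeed, if $N[u]=N[v]$ with $u\ne v$ then $uv$ is an edge (since $u\in N[v]$), and any third common neighbor would form a triangle $uvw$ in $T$; hence $N(u)=\{v\}$ and $N(v)=\{u\}$, so $T$ is the single edge $uv$, contradicting $n\ge 3$. Consequently, a valid labeling must produce $n$ pairwise distinct closed-neighborhood sums.

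Let $u^*$ be a vertex attaining $|L(u^*)|=\ell:=L(T)$ and root $T$ at $u^*$. Phase 1 handles the leaves: for each internal vertex $v$ with $k_v\le\ell$ leaf children, assign them the distinct labels $1,2,\dots,k_v\in\{1,\dots,\ell\}$. This resolves all leaf-sibling conflicts automatically, since for two leaves $u,u'$ with a common parent $v$ we have $s^*(u)-s^*(u')=f(u)-f(u')\ne 0$. Phase 2 labels the internal vertices in a post-order traversal rooted at $u^*$. When an internal vertex $v$ is processed, the quantities $T_w:=f(w)+\sum_{\text{children of }w}f(\cdot)$ are already known for every child $w$ of $v$, and choosing $f(v)$ immediately finalizes $s^*(w)=T_w+f(v)$. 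I would pick $f(v)$ in $\{1,\dots,2n-2-\ell\}$ so as to avoid (i) values that make some $s^*(w)$ equal to a previously finalized sum and (ii) values that would cause $T_v$ to coincide with the $T$-value of a sibling of $v$ processed earlier. At the final step (labeling $u^*$) I additionally ensure $s^*(u^*)$ does not collide with any earlier sum.

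The main technical obstacle is to verify that the forbidden set at each step has size strictly less than $2n-2-\ell$. A crude bound of $c_v\cdot p_v+\binom{c_v}{2}$ forbidden values (where $c_v$ is the number of children of $v$ and $p_v$ the number of sums finalized before processing $v$) would be quadratic in bushy subtrees, so one needs an amortized argument: $\sum_v c_v=n-1$ controls the total ``cost'' along the traversal, and the $\ell$ leaves attached to $u^*$ produce a guaranteed saving of $\ell$ (Phase 1 has already distinguished them as siblings, and their sums are only finalized at the very last step). If this direct accounting proves too wasteful, an alternative would be to process heavy subtrees separately and use a small Sidon-type gadget so that label choices in disjoint branches cannot interfere.

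Finally, tightness at stars: in $K_{1,n-1}$ any two leaves share the center as their unique common neighbor, hence $s^*(u_i)-s^*(u_j)=f(u_i)-f(u_j)$, forcing the $n-1$ leaf labels to be pairwise distinct and thus $s^*(K_{1,n-1})\ge n-1=2n-2-L(K_{1,n-1})$. The reverse inequality follows by assigning the leaves the labels $1,2,\dots,n-1$ and the center the label $1$: the leaf sums are $2,3,\dots,n$, the center sum is $1+\binom{n}{2}>n$ for $n\ge 3$, and all $n$ sums are distinct.
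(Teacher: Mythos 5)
Your proposal is a plan rather than a proof, and the step you yourself flag as ``the main technical obstacle'' is exactly where the argument breaks. In the post-order scheme, when you process an internal vertex $v$ with $c_v$ children, the single choice of $f(v)$ simultaneously finalizes all $c_v$ sums $s^*(w)=T_w+f(v)$, so the forbidden set has size on the order of $c_v\cdot p_v$, and no amortization over $\sum_v c_v=n-1$ controls this, because the damage is concentrated in one step. Concretely, take the spider with center $u^*$, internal children $v_1,\dots,v_k$, and one leaf below each $v_i$, so $n=2k+1$, $L(T)=1$, and the palette is $[4k-1]$. After the $v_i$ are labeled, choosing $f(u^*)$ must avoid every value $s-T_{v_i}$ where $s$ ranges over the $k$ already-finalized leaf sums and $i$ over $[k]$; this is a set of up to $\binom{k}{2}$ positive differences, which can exceed $4k-1$ unless the earlier labels $f(v_1),\dots,f(v_k)$ were coordinated in advance (e.g.\ forced into an arithmetic progression). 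Your two suggested rescues do not close this: the amortized count addresses the wrong quantity, and a Sidon-type gadget makes the difference set \emph{larger}, not smaller. So the upper bound is not established. (Your preliminary observation that all closed neighborhoods in a tree with $n\ge 3$ are distinct is correct, and your tightness argument for stars is correct and matches what is needed.)

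For comparison, the paper avoids the simultaneous-finalization problem entirely by inducting on leaf deletion: it removes a leaf $v$ adjacent to a vertex $u$ with $|L(u)|=L(T)$, labels $T-v$ by induction with labels in $[2n-2-L(T-v)]$, and then chooses the single new label $\xi=f(v)$. Only two sums are affected ($s^*(u)$ shifts by $\xi$, and $s^*(v)=f(u)+\xi$ is created), the $|L(u)|$ leaves at $u$ are automatically dominated by $s^*(u)$, and the forbidden set for $\xi$ has size at most $2n-2-|L(u)|$, which fits inside the palette $[2(n+1)-2-L(T)]$. If you want to salvage a top-down construction, you would need to pre-coordinate the $T_w$ values of siblings (e.g.\ force them to be consecutive) so that the difference set at the parent is linear rather than quadratic; as written, that coordination is missing.
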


The parameter $s(n)$ is closely related to the notion of {\it irregularity strength} of hypergraphs. There is extensive literature on irregularity strengths of graphs, a notion first introduced (for graphs) by Chartrand et al. \cite{C}, see also for example Nierhoff \cite{N} and  Blokhuis and Sz\H{o}nyi \cite{BSz},  Balister et al. \cite{BBLM}, as well as the survey by Gallian \cite{Ga}.  To define irregularity strength, consider an edge-labeling $f$ of a hypergraph $H$ with positive integers and for each vertex $x$ compute $s(x)$, the sum of labels over all hyperedges containing $x$. The labeling is {\it irregular}, if the sums $s$ are distinct for all vertices. The smallest value  of a largest label used in an irregular labeling of $H$ is denoted $irr(H)$ and $irr(n)$ is the largest value of $irr(H)$ over all $n$-vertex hypergraphs. Note that $irr(H)$ corresponds to $s(H^*)$, where $H^*$ is the dual hypergraph of $H$. 
Recall that for a hypergraph $H=(V, E)$, the dual hypergraph $H^*$ has vertex set $E$ and edge set $\{ \{  e\in E:   e \ni x\}:  x\in V\}$.
Gy\'arf\'as et al. \cite{G} provided upper bounds on $irr(H)$ and stated  that ``it is not known whether $irr(n)\geq 2n$.''
A consequence of our result gives a better lower bound $irr(n) \geq  n^{2- \epsilon}$ for any positive $\epsilon$ and sufficiently large $n$, and in particular, answers their question.
\begin{theorem}\label{t:irregular}
For any $\epsilon>0$, there is $n_0$ such that for any  $n>n_0$,  $irr(n)\geq n^{2 - \epsilon}$. 
\end{theorem}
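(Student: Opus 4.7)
The plan is to obtain Theorem~\ref{t:irregular} as a direct corollary of Theorem~\ref{t:snm} via hypergraph duality. First, I would establish the identity $irr(H^*)=s(H)$ for any (well-behaved) hypergraph $H$. Recall $H^*$ has vertex set $E(H)$ and, for each $v\in V(H)$, the hyperedge $\{e\in E(H):v\in e\}$. The key observation is that an edge-labeling of $H^*$ is exactly a vertex-labeling of $H$ via the canonical identification $E(H^*)\leftrightarrow V(H)$, and under this identification the vertex sum of $H^*$ at a vertex $e\in E(H)$ equals the edge sum $\sum_{v\in e}f(v)$ of $H$. Hence irregular edge-labelings of $H^*$ correspond bijectively to distinguishing vertex-labelings of $H$, so $irr(H^*)=s(H)$. (The paper states the companion identity $irr(H)=s(H^*)$ in the introduction; what I need is its mirror image, which of course also follows from $(H^*)^*=H$ when $H$ is simple.)

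Second, given $\epsilon>0$, I would invoke Theorem~\ref{t:snm} with $m=n$ (the range constraint $n\le m\le n^{1/\epsilon}$ holding trivially) to obtain, for all sufficiently large $n$, a hypergraph $H$ on $n$ vertices and $n$ edges with $s(H)\ge n^{2-\epsilon}$. Setting $K=H^*$ gives $|V(K)|=|E(H)|=n$, and by the identity above $irr(K)=s(H)\ge n^{2-\epsilon}$. Therefore $irr(n)\ge irr(K)\ge n^{2-\epsilon}$, which is exactly Theorem~\ref{t:irregular}.

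The only real obstacle I anticipate is a bit of bookkeeping to ensure $K=H^*$ is a genuine $n$-vertex hypergraph in the standard sense: no repeated hyperedges (which would arise from twin vertices $u,v\in V(H)$ with $\{e:u\in e\}=\{e:v\in e\}$), and no empty hyperedges (which would arise from isolated vertices of $H$). Both issues can be sidestepped by a mild perturbation of the extremal $H$---for instance, appending a small number of extra edges to break any twin classes and destroy any isolated vertices---which changes $s(H)$ only by lower-order terms and thus preserves the bound $s(H)\ge n^{2-\epsilon}$ for large $n$. With this caveat handled, the substance of Theorem~\ref{t:irregular} is entirely contained in Theorem~\ref{t:snm}; the remaining steps are purely formal.
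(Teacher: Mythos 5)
Your argument is correct and is essentially the paper's proof: the paper likewise deduces the theorem from the duality identity $irr(H^*)=s(H)$ applied to the lower-bound construction, except that it cites Lemma \ref{l:random} directly (taking $n=|E(H)|=\Theta(N^{(r+1)/2}\sqrt{\log N})$ and $irr(H^*)\ge cN^r\ge n^{2-\epsilon}$) rather than passing through Theorem \ref{t:snm}, whose lower bound is in any case built from that same lemma. The bookkeeping you flag about twin vertices and empty dual hyperedges is indeed the only caveat, and it is harmless for the random construction involved.
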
 

We mention a few other closely related problems that have been studied.
There is yet another parameter, similar to $s(H)$,  introduced by Bhattacharya et al. \cite{BDG} and  called a {\it discriminator} where the goal is to assign non-negative integer labels to vertices of a hypergraph such that the sums on the hyperedges are distinct, and positive. 
While our original motivation was  to  distinguish the vertices of a graph via sums on closed neighborhoods, there is a similar problem restricted to pairs of vertices that are adjacent, i.e., so-called adjacent vertex sum-distinguishing number, that was studied for closed neighborhoods by Axenovich et al. \cite{AHPSVW} and for open neighborhoods by Bartnicki et al. \cite {BB}, who use also an unpublished observation by  Norin. These above-mentioned adjacency-dependent parameters can however be upper-bounded by a function of the maximum degree, independent of the number of vertices of the graph. Finally, we mention that distinguishing labelings of {\em graphs} were also studied by Ahmad et al. \cite{AAB}.

The rest of the paper is structured as follows. In the next section we prove several lemmas that are required for our theorems. In particular, Lemma \ref{l:equivalence} comparing $s^*(n)$ and $s(n)$, Lemma \ref{l:random}, which is the main ingredient in the lower bound on $s(n,m)$ as it implies the existence of a certain (randomly constructed) hypergraph $H$ with large $s(H)$, and Lemma \ref{l:random-sum} about the distribution of the sum of discrete random variables, that we use for the upper bound on $s(n,m)$.
In Section 3 we prove Theorem \ref{t:snm}, our main result.
Section 4 contains the proofs of Theorems \ref{t:Delta}, \ref{t:trees}, and \ref{t:irregular}. The final section consists of concluding remarks and open problems.

%%%%%%%%%%%%%%%%%%%%%%%%%%%%%%%%%%%%%%%%%%%%%%%%%%%%%%%%%%%%%%%%%%%%%%%%%
%%%%%%%%%%%%%%%%%%%%%%%%%%%%%%%%%%%%%%%%%%%%%%%%%%%%%%%%%%%%%%%%%%%%%%%%%

\section{Lemmas} 

This section consists of several lemmas facilitating the proof of our main theorems. For a positive integer $x$, we use the notation $[x]=\{1, \ldots, x\}$. 
Our first lemma relates $s^*(n)$, $s(n)$, and $s^*(2n)$.

\begin{lemma}\label{l:equivalence}
	For any $n\geq 2$, we have $s^*(n) \leq  s(n) \leq s^*(2n)$.
\end{lemma}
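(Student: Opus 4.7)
\noindent\emph{Plan.} I would prove the two inequalities separately by constructing explicit auxiliary (hyper)graphs.

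\noindent\emph{Lower inequality $s^*(n)\le s(n)$.} Given any $n$-vertex graph $G$, I form the hypergraph $H_G$ on $V(G)$ whose hyperedges are the pairwise distinct closed neighborhoods $\{N[v] : v\in V(G)\}$. Straight from the definitions, a labeling of $V(G)$ is vertex sum-distinguishing for $G$ if and only if it is sum-distinguishing for $H_G$, so $s^*(G)=s(H_G)$. Since $H_G$ has $n$ vertices and at most $n$ hyperedges, I would pad it with arbitrary new hyperedges (possible because $2^n>n$ for $n\ge 2$) to produce a hypergraph $H'$ on $n$ vertices with exactly $n$ hyperedges; any sum-distinguishing labeling of $H'$ is also sum-distinguishing for the sub-hypergraph $H_G$, giving $s(H_G)\le s(H')\le s(n,n)=s(n)$. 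Maximizing over $G$ yields $s^*(n)\le s(n)$.

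\noindent\emph{Upper inequality $s(n)\le s^*(2n)$.} For a hypergraph $H$ with $V(H)=\{v_1,\dots,v_n\}$ and hyperedges $e_1,\dots,e_n$, I would build a graph $G$ on the $2n$ vertices $V(H)\cup\{u_1,\dots,u_n\}$ by (i) making $\{u_1,\dots,u_n\}$ a clique and (ii) joining $u_i$ to every vertex of $e_i$. Then $N_G[u_i]=\{u_1,\dots,u_n\}\cup e_i$, so for any labeling $f$ of $V(G)$,
\[
s^*_f(u_i)=T+\sum_{v\in e_i} f(v), \qquad T:=\sum_{t=1}^n f(u_t),
\]
with $T$ independent of $i$. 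Because the $e_i$ are pairwise distinct, the closed neighborhoods $N_G[u_i]$ are pairwise distinct; hence if $f$ is vertex sum-distinguishing on $G$ with largest label $k$, the values $s^*_f(u_i)$ must be distinct, and the displayed identity then forces the hyperedge sums $\sum_{v\in e_i} f(v)$ to be distinct as well. Thus $f|_{V(H)}$ is a sum-distinguishing labeling of $H$ with maximum label at most $k$, yielding $s(H)\le s^*(G)\le s^*(2n)$. Maximizing over $H$ gives $s(n)\le s^*(2n)$.

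\noindent\emph{Main ingredient.} The only genuinely creative step is placing the auxiliary vertices $u_1,\dots,u_n$ into a clique: this ensures that their combined label contribution $T$ appears in every $s^*_f(u_i)$, so it cancels when comparing $s^*_f(u_i)$ with $s^*_f(u_j)$ and leaves a direct comparison of hyperedge sums of $H$. I do not anticipate any further obstacle; once this observation is made, the remaining verifications are routine.
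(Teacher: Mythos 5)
Your proposal is correct and is essentially identical to the paper's own argument: the first inequality uses the closed-neighborhood hypergraph $H_G$ exactly as in the paper, and the second uses the same auxiliary construction (a clique on $n$ new vertices $u_1,\dots,u_n$ with $u_i$ joined to the vertices of $e_i$, so that the common term $T$ cancels when comparing $s^*_f(u_i)$ and $s^*_f(u_j)$). No gaps; your explicit remark that distinct hyperedges force distinct closed neighborhoods $N_G[u_i]$ is a small point the paper leaves implicit.
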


\begin{proof}
Let $G$ be an $n$-vertex graph with $s^*(G) = s^*(n)$. As mentioned in the introduction, consider a hypergraph $H$ on the vertex set $V=V(G)$ with hypergedges corresponding to the closed neighborhoods of vertices in $G$.
Since a labeling $f$ of $V$ is vertex sum-distinguishing in $G$ if and only if it distinguishing in $H$, we have that $s^*(n) = s^*(G)= s(H) \leq s(n)$ \footnote{Observe that $H$ might have less than $n$ edges since not all closed neighborhoods of $G$ are necessarily distinct, but since adding edges to a hypergraph cannot decrease $s$, we indeed have $s(H) \le s(n)$}.
On the other hand, consider a hypergraph $H$ on a vertex set $B=\{b_1, \ldots, b_n\}$ and with $n$ hyperedges $e_1, \ldots, e_n$, such that $s(H)=s(n)$.
Let $G$  be a graph on vertex set $A\cup B$, where $A= \{a_1, \ldots, a_n\}$, $A\cap B=\emptyset$, 
where $A$ induces a clique with $n$ vertices, $B$ induces an independent set,  and $a_ib_j\in E(G)$ if and only if $b_j\in e_i$. Then we see that if a labeling $f$ is vertex sum-distinguishing in $G$ then, restricted to $B$, it is distinguishing in $H$.
Consider such an optimal $f$, i.e. with a largest label $s^*(G)$. Since $G$ has $2n$ vertices,
$s^*(G) \leq s^*(2n)$.  Thus,  $s(n) = s(H) \leq s^*(G) \leq s^*(2n)$.
\end{proof}

\begin{lemma}\label{l:random}
For any fixed $r\geq 2$, there is a constant $c=c(r)$ such that for every positive integer $N$ it holds that
there exists an $r$-uniform hypergraph $H$ on $N$ vertices such that 
\begin{eqnarray*}
|E(H)|&= & \Theta(N^{(r+1)/2}\sqrt{\log N}), { \mbox and }\\
s(H) &\geq & c N^r.
\end{eqnarray*}
\end{lemma}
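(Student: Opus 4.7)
The plan is to construct $H$ by a probabilistic first-moment argument. Fix constants $c = c(r) \in (0, 1/(2r \cdot r!))$ and $C = C(r)$ large, and set $K := cN^r$, $R := rK$, $M := \binom{N}{r}$, and $m := CN^{(r+1)/2}\sqrt{\log N}$; the choice of $c$ guarantees $M \geq 2R$ for $N$ large enough. I would pick $H$ uniformly at random from the collection of $r$-uniform hypergraphs on $[N]$ with exactly $m$ edges, so $|E(H)| = m = \Theta(N^{(r+1)/2}\sqrt{\log N})$ deterministically, and show that with positive probability no labeling $f : [N] \to [K]$ has all $m$ edge-sums distinct. The latter property forces $s(H) > K = cN^r$, proving the lemma.

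For a fixed labeling $f$, let $\mathcal{R}_s := |\{A \in \binom{[N]}{r} : \sum_{v \in A} f(v) = s\}|$, so $\sum_s \mathcal{R}_s = M$ over at most $R$ nonzero terms. The number of $m$-edge hypergraphs on which $f$ is distinguishing equals the elementary symmetric polynomial $e_m(\mathcal{R}_1, \ldots, \mathcal{R}_R)$, since such an $H$ corresponds to choosing $m$ distinct sum classes and one $r$-subset from each. The identity $e_{m-1}(\mathcal{R}_{-i}) - e_{m-1}(\mathcal{R}_{-j}) = (\mathcal{R}_j - \mathcal{R}_i)\,e_{m-2}(\mathcal{R}_{-i,-j})$ implies, via the Schur--Ostrowski criterion, that $e_m$ is Schur-concave on the simplex $\{\mathcal{R} \geq 0 : \sum_s \mathcal{R}_s = M\}$. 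Its maximum is therefore attained at the uniform point $\mathcal{R}_s \equiv M/R$, giving
$$ \Pr\bigl[f \text{ is distinguishing on } H\bigr] \;\leq\; \frac{\binom{R}{m}(M/R)^m}{\binom{M}{m}} \;=\; \prod_{i=0}^{m-1}\frac{1 - i/R}{1 - i/M} \;\leq\; \exp\!\left(-\frac{m^2}{8R}\right), $$
where the final step uses the elementary inequality $(1-i/R)/(1-i/M) \leq 1 - i(1/R - 1/M)$ together with $M \geq 2R$ and $1 - x \leq e^{-x}$.

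Substituting $R = rcN^r$ and $m^2 = C^2 N^{r+1}\log N$ into the exponential and union-bounding over the $K^N \leq N^{rN}$ labelings in $[K]^N$ yields
$$ \Pr\bigl[\text{some } f \text{ is distinguishing on } H\bigr] \;\leq\; \exp\!\left(\left(r - \frac{C^2}{8rc}\right) N\log N\right), $$
which tends to $0$ as soon as $C^2 > 8r^2 c$. Since $c$ depends only on $r$, any sufficiently large $C = C(r)$ works (e.g.\ $C = 3r$ handles every $r \geq 2$), so some deterministic $H$ realises both the prescribed edge count and $s(H) > cN^r$.

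The main obstacle I anticipate is the Schur-concavity step and its conversion into an exponential bound: one must handle every labeling uniformly, including highly skewed $\mathcal{R}$, by showing that the ``hardest'' ones for a random hypergraph to avoid collisions on are precisely the near-uniform distributions, and then extract a birthday-paradox-style exponential decay in $N\log N$. This decay depends critically on the two-sided separation $m \ll R \ll M$; noticeably shrinking $m$ below $\Theta(N^{(r+1)/2}\sqrt{\log N})$ would drop the exponent in the probability bound below the $rN\log N$ coming from the union bound and break the argument, which is why the stated edge count is the natural target for this approach.
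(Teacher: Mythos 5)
Your proof is correct, and it follows the same overall strategy as the paper: pick a random $r$-uniform hypergraph with edge density $\Theta(\sqrt{\log N}/N^{(r-1)/2})$, show that each fixed labeling with labels in $[cN^r]$ is distinguishing with probability at most $\exp(-\Omega(N\log N))$ with a constant large enough to beat the union bound over the at most $N^{rN}$ labelings. The technical execution differs in two places, though. First, the paper samples $H\sim G_r(N,p)$ (independent edges), so it must separately note that $|E(H)|$ concentrates around $p\binom{N}{r}$; your uniform-$m$-subset model makes the edge count deterministic and sidesteps that, at the small cost of losing independence between edges. Second, and more substantively, the core extremal step is organized differently: the paper writes the distinguishing probability as a product over sum-classes of $\Pr[\le 1 \text{ edge in the class}]$ and uses an ad hoc pairwise exchange inequality (its inequality \eqref{e:conv1}) plus a second calculation to show the worst case is when the class sizes are (nearly) equal and small; you instead identify the count of distinguishing $m$-edge hypergraphs as the elementary symmetric polynomial $e_m(\mathcal{R})$ and invoke Schur-concavity via Schur--Ostrowski to land directly on the uniform profile, after which the bound $\prod_i\frac{1-i/R}{1-i/M}\le e^{-m^2/(8R)}$ is a clean birthday-paradox computation. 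Your route is arguably tidier, replacing two bespoke "routine calculations" with one standard majorization fact, and it makes transparent that the condition driving the bound is exactly $m\ll R\ll M$ (i.e., many more edges than available sums, but far fewer sums than $r$-sets), which is the same condition the paper needs when it argues that the number of classes $sr$ is at most about half of $\binom{N}{r}$. One cosmetic point: like the paper, your argument needs $N$ large as a function of $r$; the "for every positive integer $N$" in the statement is absorbed by adjusting $c(r)$ and the implicit constants, which you should say explicitly.
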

\begin{proof}
We are going to extend a result of Bollob\'as and Pikhurko \cite{BP} on distinguishing labelings of graphs to $r$-uniform hypergraphs. Also note that the inequalities in the lemma's statement allow us to assume, whenever necessary, that $N$ is sufficiently large as a function of $r$.

Proof idea:
We provide a lower bound on $s(H)$ for a random $r$-uniform hypergraph $H \sim G_r(N, p)$,
i.e., a hypergraph on a vertex set $[N]$, such that hypergedges are chosen independently with probability $p$. In order to show that $s(H)>s$ for a chosen $s$, we shall consider a fixed labeling $f$ of $[N]$ and denote by $p'$ the probability that $f$ is distinguishing for $H$.
Now, if it holds that $p' = o(s^{-N})$ then we have $\Pr[s(H) \leq s] \leq s^N p' = o(1)$.
So, in this case we see that almost surely $s(H) >s$.

Let $q  = \sqrt{13 r \cdot r!}$, $p = q \sqrt{\ln N}/\sqrt{N^{r-1}}$, $s= \lfloor N^r/(2r\cdot r!) \rfloor$ and $H \sim G_r(N, p)$.
Consider a labeling $f: [N]\rightarrow [s]$. For any $e \in \binom{[N]}{r}$, let $s(e) = \sum_{i\in e}f(i)$. 
We estimate $p'$, the probability that $f$ is distinguishing for $H$.

Let $H_k$ be the $r$-uniform hypergraph on a vertex set $[N]$, with $E(H_k)= \{ e \in  \binom{[N]}{r}\,:\, s(e)=k\}$ and denote $h_k=|E(H_k)|$. 
Note that for any $r$-subset of the vertices  $e \in \binom{[N]}{r}$, $r \le s(e) \leq sr$
and that the $H_k$'s form an edge-decomposition of the complete $r$-uniform hypergraph on the vertex set $[N]$.
Note that $f$ is distinguishing for $H$ if and only if $H$ has at most one edge in each of the $H_k$'s.
We need to consider only those $H_k$'s that have at least two edges so let $K=\{k:  h_k\geq 2\}$. We have 
\begin{eqnarray*}
p'  &= & \Pr[f \mbox{ is distinguishing for } H]\\
     & = & \prod _{k\in K}  \Pr[|E(H)\cap E(H_k)|\leq 1] \\
     & = & \prod _{k\in K}  \left((1-p)^{h_k} + h_kp(1-p)^{h_k-1}\right)\;.\\
\end{eqnarray*}
We need the following statement that is a routine calculation.  If $t_1 \leq t_2 -2  $ then
$$
\left( (1-p)^{t_1} + t_1p(1-p)^{t_1-1} \right) \left((1-p)^{t_2 }+ t_2p(1-p)^{t_2-1}\right)
$$
\begin{equation}\label{e:conv1}  
\leq \left( (1-p)^{t_1+1} + (t_1+1)p(1-p)^{(t_1+1)-1} \right) \left((1-p)^{t_2-1 }+ (t_2-1)p(1-p)^{(t_2-1)-1}\right)\;.
\end{equation}
Using \eqref{e:conv1} we can upper-bound the expression for $p'$ by the one in which each $h_k$ takes an  integer  value $x$ or $x+1$, for some $x$. Let there be $b$ of $x$'s and $|K|-b$  of
$(x+1)$'s, so  $bx+ (|K|-b)(x+1) = \sum_{k\in K} h_k = h$. Assume that $xb\geq h/2$
(the case where $(|K|-b)(x+1) \ge h/2$ is analogous). We have:
\begin{eqnarray*}
p'      & =  & \prod _{k\in K}   (1-p)^{h_k} + h_kp(1-p)^{h_k-1}\\
         & \underset{\eqref{e:conv1}}{\leq} &   \left((1-p)^x + xp(1-p)^{x-1}\right) ^{b} \left((1-p)^{x+1} + (x+1)p(1-p)^{(x+1)-1}\right) ^{|K|-b}\\
         & \leq &  ((1-p)^x + xp(1-p)^{x-1}) ^{b} \\
         & \leq & ((1-p)^x + xp(1-p)^{x-1}) ^{\frac{h}{2}/x}\;.
\end{eqnarray*}         
It is also a routine calculation, that for any any $p$, $0<p<1$
\begin{equation}\label{e:decrease} 
\max_{t\geq 2}  ((1-p)^t + tp(1-p)^{t-1})^{1/t}  \underset{t=2}{=} ((1-p)^2 + 2p(1-p))^{1/2}\;.
\end{equation}
Coming back to bounding $p'$, we have
\begin{eqnarray*}
p'      & \leq  &  ((1-p)^x + xp(1-p)^{x-1}) ^{\frac{h}{2}/x}\\       
         &\underset{\eqref{e:decrease}}{\leq }& ((1-p)^2 + 2p(1-p))^{h/4}\\
         & = & (1-p^2)^{h/4}\\
         & \leq & e^{-p^2h/4}\;.
\end{eqnarray*}
Observe also that the total number of hypergraphs $H_k$ is at most
$$
sr \leq \frac{N^r}{2r!}  \leq  \frac{1}{2}\binom{N}{r} (1+o(1))\;.
$$
Thus, at least about a half of the possible $r$-sets of vertices belong to $H_k$'s that have at least two  edges, i.e., to $H_k$'s, $k\in K$.
In other words, for sufficiently large $N$,
$$
h = \sum_{k\in K} h_k  \geq \binom{N}{r} - \frac{1}{2}\binom{N}{r} (1+o(1)) \geq \frac{N^r}{3r!}\;.
$$
Recall that $p= q \sqrt{\ln N}/\sqrt{N^{r-1}}$ and $q=\sqrt{13r \cdot r!}$. Then
\begin{eqnarray*}
p' & \leq & e^{-p^2h/4}\\
&  \le &  e^{ - q^2(\ln N/ N^{r-1})N^r/(12 r!)}\\
& = & e^{ - (q^2/12r!) N\ln N}\\
& = & e^{ - (13r/12) N\ln N}\\
& \le & N^{-r N}\;.
\end{eqnarray*}
Also recall that  $s= \lfloor  N^r/(2r\cdot r!) \rfloor$ so,
\begin{eqnarray*}
	\Pr[s(H) \leq s] & \le & s^N p'\\
	& \le & s^N N^{-r N}\\
	& = & o(1)\;.
\end{eqnarray*}
This implies that  with high probability $s(H) > s= cN^r$, for  a constant $c$ depending on $r$.  Moreover, with high probability $|E(H)| = \Theta(p\binom{N}{r} )=\Theta(pN^r )= \Theta(N^{(r+1)/2} \sqrt{\log N} )$.
\end{proof}

\vspace{6pt}
Our final lemma of this section upper-bounds the probability that a sum of i.i.d. uniform discrete random variables attains a particular value. We will use it as an ingredient in the upper bound proof of Theorem \ref{t:snm}.

\begin{lemma}\label{l:random-sum} For any constant $C>0$, there exists $\ell_0$ such that for any $\ell > \ell_0$ the following holds. There is an integer $N_0=N_0(\ell)$ such that for any $N>N_0$, if
$X_1, \ldots, X_{2\ell}$ are i.i.d. uniform discrete random variables over $[N]$, 
then for any integer $t$, $\Pr[X_1+\cdots +X_{2\ell} = t] \leq 5/(e^{4C}N)$.
\end{lemma}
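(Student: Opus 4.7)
The statement is a quantitative anti-concentration bound for $S := X_1+\cdots+X_{2\ell}$, whose variance is $\sigma_S^2 = 2\ell(N^2-1)/12 \sim \ell N^2/6$. A local limit theorem predicts $\max_t\Pr[S=t] \sim \sqrt{3/(\pi\ell)}/N$, and since this tends to $0$ as $\ell \to \infty$ it must fall below the prescribed $5/(e^{4C}N)$ once $\ell$ is chosen sufficiently large in terms of $C$. My plan is to prove this directly via characteristic-function (Fourier) estimates, which is the standard route to lattice anti-concentration bounds of this shape.

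First I would invoke Fourier inversion for integer-valued random variables to obtain
\begin{equation*}
\Pr[S=t] \;\le\; \frac{1}{2\pi}\int_{-\pi}^{\pi} |\phi(\theta)|^{2\ell}\,d\theta, \qquad \phi(\theta) \;=\; \frac{1}{N}\sum_{k=1}^{N} e^{ik\theta},
\end{equation*}
and use the geometric-sum computation $|\phi(\theta)| = |\sin(N\theta/2)|/(N|\sin(\theta/2)|)$. Then I would split the integral at a threshold $\theta_0$ of order $1/N$. On the core $|\theta|\le\theta_0$, a Taylor expansion together with $\operatorname{Var}(X_1) = (N^2-1)/12$ yields a Gaussian-type bound $|\phi(\theta)|^2 \le 1 - c(N\theta)^2 \le \exp(-c N^2\theta^2)$ for an absolute constant $c>0$. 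On the tail $\theta_0 < |\theta|\le\pi$, the elementary inequality $|\sin(\theta/2)|\ge |\theta|/\pi$ gives $|\phi(\theta)| \le \pi/(N|\theta|)$, which is strictly less than a fixed $q<1$ once $\theta_0$ is chosen as a sufficient constant multiple of $1/N$.

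Integrating, the core region contributes at most $\frac{1}{2\pi}\int_{\mathbb{R}} e^{-c\ell N^2\theta^2}\,d\theta = (1/N)\cdot\sqrt{1/(4\pi c \ell)}$, which is of order $1/(N\sqrt\ell)$ and so can be made smaller than any prescribed $\epsilon/N$ by taking $\ell$ large; the tail contributes at most $(1/N)\cdot q^{2\ell}$, exponentially small in $\ell$. Summing the two contributions and applying with $\epsilon$ slightly below $5/e^{4C}$ yields the lemma: $\ell_0$ is chosen to depend only on $C$, while $N_0(\ell)$ is chosen large enough to absorb the finite-$N$ remainder in the Taylor step and the error in the transition at $\theta_0$. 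I expect the main technical obstacle to be establishing the Gaussian kernel bound $|\phi(\theta)|^2 \le \exp(-c N^2\theta^2)$ uniformly on $|\theta|\le\theta_0$ with a clean absolute constant $c$; this reduces to elementary but slightly fiddly estimates comparing $\sin^2(N\theta/2)$ and $N^2\sin^2(\theta/2)$ via truncated Taylor series for $\sin$. Everything else is routine calculus.
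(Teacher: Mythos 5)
Your proposal is correct, but it takes a genuinely different route from the paper. The paper avoids characteristic functions entirely: it splits $S$ into two halves $f_1=X_1+\cdots+X_\ell$ and $f_2=X_{\ell+1}+\cdots+X_{2\ell}$, proves by induction that each half has a unimodal (symmetric, concave-ordered) distribution, combines a CLT-based anti-concentration claim (no window of width $\gamma\sqrt{\ell}N$ around the mean carries more than $e^{-4C}$ of the mass) with a Chernoff concentration claim, and then covers the convolution sum $\sum_k \Pr[f_1=k]\Pr[f_2=t-k]$ by five regions, each contributing at most $e^{-4C}/N$ --- which is exactly where the constant $5$ in the statement comes from. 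Your Fourier-inversion argument is the standard local-limit-theorem route; it is shorter in spirit, gives the sharper asymptotic $\max_t \Pr[S=t]\sim\sqrt{3/(\pi\ell)}/N$, and in fact removes the need for $N_0$ to depend on $\ell$, at the price of the ``fiddly'' kernel estimate $|\phi(\theta)|^2\le e^{-cN^2\theta^2}$ on $|\theta|\le\theta_0$ (which does go through: $2\ln(\sin u/u)\le -u^2/3$ on $|u|<\pi$ since all terms of the logarithmic series are negative, so you may take $\theta_0=2\pi/N$, $q=1/2$, $c=1/24$). One point you should make explicit: bounding the tail integrand pointwise by $q^{2\ell}$ and multiplying by the interval length only yields $O(q^{2\ell})$, which does \emph{not} carry the required factor $1/N$; to get your stated bound $(1/N)q^{2\ell}$ you must actually integrate the power-law decay, $\int_{\theta_0}^{\pi}\bigl(\pi/(N\theta)\bigr)^{2\ell}d\theta\le \frac{\theta_0}{2\ell-1}\bigl(\pi/(N\theta_0)\bigr)^{2\ell}=O\bigl(q^{2\ell}/(\ell N)\bigr)$. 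With that step written out, your argument is complete.
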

\begin{proof}
We first establish the following claim, asserting the concavity of a sum of i.i.d. uniform discrete random variables. 
\begin{claim}\label{claim:0}
	Let $X_1,\ldots,X_\ell$ be i.i.d. uniform discrete random variables over $[N]$ and let
	$X = X_1 + \cdots + X_\ell$. Then for every real $d \ge 0$ it holds that
	$$
	\Pr[ X= (N+1)\ell/2-d] = \Pr[ X= (N+1)\ell/2+d] \ge \Pr[X = (N+1)\ell/2+d+1]\;.
	$$
\end{claim}
\begin{proof}
It will be slightly more convenient to define $W_i=X_i-1$,  $i=1, \ldots, \ell$, $W=W_1+\cdots+W_\ell$, $q=N-1$ and prove 
	the equivalent statement
	$$
	\Pr[W = q\ell/2-d] = \Pr[W = q\ell/2+d] \ge \Pr[W = q\ell/2+d+1]\;.
	$$
	Observe first that $\Pr[W = q\ell/2-d] = \Pr[W = q\ell/2+d]$ as $W$ is symmetric around its mean $q\ell/2$.
	Observe next that $W \in \{0,\ldots,q\ell\}$ so the inequality is only interesting if $d+1 \le q\ell/2$
	and $q\ell/2+d$ is an integer. We prove the claim by induction on $\ell$. The case $\ell=1$ trivially holds as $W_1$ is uniform. Assuming the claim holds for $\ell-1$, we prove it for $\ell$. Let $W^*=W_1+\cdots+W_{\ell-1}$ so
	$W = W^*+W_\ell$. We prove that $\Pr[W = q\ell/2+d] \ge \Pr[W = q\ell/2+d+1]$. Observe that
	$$
		\Pr[W = q\ell/2+d]=\sum_{k=0}^{q} \Pr[W^*=q\ell/2+d-k]\cdot \Pr[W_\ell=k] = \frac{1}{q+1}\sum_{k=0}^{q} \Pr[W^*=q\ell/2+d-k]\;.
	$$
	Similarly,
	$$
		\Pr[W = q\ell/2+d+1]=\frac{1}{q+1}\sum_{k=0}^{q} \Pr[W^*=q\ell/2+d+1-k]\;.
	$$
	So,
	\begin{align*}
		& (q+1)\left(\Pr[W = q\ell/2+d]-\Pr[W = q\ell/2+d+1]\right)\\
		= ~ & \Pr[W^*=q\ell/2+d-q]-\Pr[W^*=q\ell/2+d+1]  \\
		= ~ & \Pr[W^*=q(\ell-1)/2+d-q/2] - \Pr[W^*=q(\ell-1)/2+d+1+q/2]\\
		\ge ~ & 0
	\end{align*}
	where the last inequality follows from the induction hypothesis and from the fact that $|d-q/2| < |d+1+q/2|$.
\end{proof}
To prove the lemma, let 
$$
f_1=X_1+\cdots +X_\ell  \mbox{ ~and~  }  f_2= X_{\ell+1} + \cdots X_{2\ell}\;.
$$
We estimate the probability $\Pr[f_1+f_2=t]$.
We will first need to prove two additional claims. The first is an anti-concentration result for $f_1,f_2$ and the second is a concentration result for them.
Throughout the remainder of the proof we assume that $\ell$ is sufficiently large as a function of $C$ and that $N$ is sufficiently large as a function of $\ell$.
\begin{claim}\label{claim:1}
	Let $j \in \{1,2\}$. For any $C>0$, there exists $\gamma=\gamma(C) > 0$ such that for every real number $x$ it holds that
	\begin{equation}
	\Pr[x-\gamma\sqrt{\ell}N \le f_j \le x+\gamma\sqrt{\ell}N] \le e^{-4C}\;.
	\end{equation}
\end{claim}
\begin{proof}
	Recall that each $X_i$ is uniform discrete over $[N]$. For the sake of our analysis it would be convenient
	to obtain $X_i$ as follows. Let $U_i \sim U[0,1]$ (i.e. $U_i$ is uniform continuous in $[0,1]$).
	Define $X_i = \lceil N U_i \rceil$. Since $\Pr[U_i=0]=0$, we have that $X_i$ is discrete uniform over $[N]$
	as the probability that $X_i=t$ is $1/N$ for each $t \in [N]$.  Denote $g_1=U_1+\cdots+U_\ell$ and $g_2 = U_{\ell+1}+\cdots+U_{2\ell}$ so we have
	$f_j-\ell \le Ng_j \le f_j$ for $j \in \{1,2\}$.
	Since $\ell < \gamma \sqrt{\ell}N$, it suffices to prove that for every real number $y$ it holds that
	$$
	\Pr[ y-2\gamma\sqrt{\ell} \le g_j \le y+2\gamma\sqrt{\ell}] \le e^{-4C}\;.
	$$
	As $g_j$ is an Irwin-Hall distribution (i.e. the sum of i.i.d. copies of $U[0,1]$) with mean $\ell/2$, the maximum of the left hand side is obtained when $y=\ell/2$ so it suffices to prove that
	\begin{equation}\label{e:anti}
	\Pr\left[ \frac{\ell}{2}-2\gamma\sqrt{\ell} \le g_j \le \frac{\ell}{2}+2\gamma\sqrt{\ell}\right] \le e^{-4C}\;.
	\end{equation}
	Since $U_i \sim U[0,1]$, it has mean $1/2$ and standard deviation $1/\sqrt{12}$, so we have by
	the Central Limit Theorem that
	$$
	\lim_{\ell \rightarrow \infty} \Pr\left[ \frac{\ell}{2}-2\gamma\sqrt{\ell} \le g_j \le \frac{\ell}{2}+2\gamma\sqrt{\ell}\right] = \Phi(2\sqrt{12}\gamma)-\Phi(-2\sqrt{12}\gamma)=2\Phi(2\sqrt{12}\gamma)-1\;.
	$$
	Now, choose $\gamma$ such that $2\Phi(2\sqrt{12}\gamma)-1=e^{-4C}/2$. Then  we have
	$$
	\lim_{\ell \rightarrow \infty} \Pr\left[\frac{\ell}{2}-2\gamma\sqrt{\ell} \le g_j \le \frac{\ell}{2}+2\gamma\sqrt{\ell}\right] = \frac{1}{2e^{4C}}\;,
	$$
	implying that for all $\ell$ sufficiently large as a function of $C$ we have that \eqref{e:anti} holds.
\end{proof}
	
\begin{claim}\label{claim:2}
	Let $j \in \{1,2\}$. It holds that
	\begin{equation}
		\Pr[|f_j- \ell(N+1)/2|  \ge \ell^{2/3} N] \le \frac{1}{\ell}\;.
	\end{equation}
\end{claim}
\begin{proof}
	As in the proof of the previous claim, since $f_j-\ell \le N g_j \le f_j$ and since $2\ell \le \ell^{2/3}N$,
	it suffices to prove that
	$$
		\Pr\left[|g_j-\ell/2| \ge \frac{1}{2}\ell^{2/3}\right] \le \frac{1}{\ell}\;.
	$$
	Since $g_j$ is the sum of $\ell$ i.i.d. random variables, each in $[0,1]$, and each with mean $\frac{1}{2}$, it follows by Chernoff's inequality (see, e.g. \cite{AS}, Appendix A) that
	$$
	\Pr\left[|g_i-\ell/2| \ge \frac{1}{2}\ell^{2/3}\right] \le 2e^{-\ell^{4/3}/(8\ell)} =2e^{-\ell^{1/3}/8} \le \frac{1}{\ell}\;.
	$$
\end{proof}

Armed with the three claims we proceed as follows. Since $f_1$ and $f_2$ are independent 
and since $\ell \le f_j \le \ell N$ we have that, for any $t$, $0\leq t \leq 2 \ell N$, 
$$
\Pr[f_1+f_2=t] = \sum_{k=\ell}^{\ell N} \Pr[f_1=k]\cdot \Pr[f_2=t-k]\;.
$$
We cover $\{\ell,\ldots,\ell N\}$ with five (not necessarily disjoint) sets $S_1,S_2,S_3,S_4,S_5$ defined as follows.
\begin{align*}
S_1 = & \{k\,|\, \ell(N+1)/2 - \gamma \sqrt{\ell} N  \le k \le \ell(N+1)/2 + \gamma \sqrt{\ell} N\}\,\\
S_2 = & \{k\,|\, \ell(N+1)/2 - \gamma \sqrt{\ell} N  \le t-k \le \ell(N+1)/2 + \gamma \sqrt{\ell} N\}\,\\
S_3 = & \{k\,|\, |k- \ell(N+1)/2|\ge \ell^{2/3} N\}\,\\
S_4 = & \{k\,|\, |(t-k) - \ell(N+1)/2|\ge \ell^{2/3} N\}\,\\
S_5 = & \{\ell,\ldots,\ell N\} \setminus (S_1 \cup S_2 \cup S_3 \cup S_4)\;.
\end{align*}
For $z \in \{1,2,3,4,5\}$ let $J_z = \sum_{k \in S_z} \Pr[f_1=k]\cdot \Pr[f_2=t-k]$ so that we have
$$
\Pr[f_1+f_2=t] \le J_1+J_2+J_3+J_4+J_5\;.
$$
We now bound each $J_z$ where we will use Claim \ref{claim:0}, Claim \ref{claim:1}, Claim \ref{claim:2},
and the trivial bound $\Pr[f_j=k'] \le 1/N$ which holds for every $k' \in [N]$ since
$f_j$ is the sum of discrete random variables, each uniform on $N$ possible values.
By the definition of $S_1$ and by Claim \ref{claim:1} applied to $f_1$ with $x=\ell(N+1)/2$:
\begin{equation*}
	J_1 = \sum_{k \in S_1} \Pr[f_1=k]\cdot \Pr[f_2=t-k] \le \frac{1}{N}\sum_{k \in S_1} \Pr[f_1=k] \le \frac{1}{Ne^{4C}}\;.
\end{equation*}
Similarly, by the definition of $S_2$ and by Claim \ref{claim:1} applied to $f_2$ with $x=\ell(N+1)/2$:
\begin{equation*}
	J_2 = \sum_{k \in S_2} \Pr[f_1=k]\cdot \Pr[f_2=t-k] \le \frac{1}{N}\sum_{k \in S_2} \Pr[f_2=t-k] \le \frac{1}{Ne^{4C}}\;.
\end{equation*}
By the definition of $S_3$ and by Claim \ref{claim:2} applied to $f_1$:
\begin{equation*}
	J_3 = \sum_{k \in S_3} \Pr[f_1=k]\cdot \Pr[f_2=t-k] \le \frac{1}{N}\sum_{k \in S_3} \Pr[f_1=k] \le \frac{1}{N\ell} \le \frac{1}{Ne^{4C}}\;.
\end{equation*}
By the definition of $S_4$ and by Claim \ref{claim:2} applied to $f_2$:
\begin{equation*}
	J_4 = \sum_{k \in S_4} \Pr[f_1=k]\cdot \Pr[f_2=t-k] \le \frac{1}{N}\sum_{k \in S_4} \Pr[f_2=t-k] \le \frac{1}{N \ell} \le \frac{1}{Ne^{4C}}\;.
\end{equation*}

Finally consider $J_5$. To estimate it, we will distinguish between two cases, according to the value of $t$.
Assume first that $t \le 2\ell(N+1)/3$ or $t \ge 4\ell(N+1)/3$.
In this case $S_3 \cup S_4 = \{\ell,\ldots,\ell N\}$ and hence $S_5=\emptyset$
implying that $J_5=0$.  Assume next that $2\ell(N+1)/3 < t < 4\ell(N+1)/3$.
First, observe that the number of elements of $S_5$ is at most $2\ell^{2/3}N+1 < 3\ell^{2/3}N$ as it is disjoint from, say, $S_3$. Consider some term of $J_5$, namely $\Pr[f_1=k]\cdot \Pr[f_2=t-k]$ where
$k \in S_5$. By Claim \ref{claim:0}, we have that $\Pr[f_1=k] \le \Pr[f_1 = k^*]$ where $k^* \in S_1$
as $k^*$ is closer to the mean $\ell(N+1)/2$ than $k$ is. But the number of elements in $S_1$ is at least
$2\gamma\sqrt{\ell}N$ so we must have $\Pr[f_1=k] \le 1/{|S_1|} \le 1/(2\gamma\sqrt{\ell}N)$.
Similarly, by Claim \ref{claim:0}, we have that $\Pr[f_2=t-k] \le \Pr[f_2 = t-k^*]$ where $k^* \in S_2$
as $t-k^*$ is closer to the mean $\ell(N+1)/2$ than $t-k$ is.
But the number of elements in $S_2$ is at least
$2\gamma\sqrt{\ell}N$ so we must have $\Pr[f_2=t-k] \le 1/{|S_2|} \le 1/(2\gamma\sqrt{\ell}N)$.
Hence, in any case,
\begin{equation*}
	J_5 = \sum_{k \in S_5} \Pr[f_1=k]\cdot \Pr[f_2=t-k] \le 3\ell^{2/3}N \cdot \left(\frac{1}{2\gamma\sqrt{\ell}N}\right)^2\ \le \frac{1}{Ne^{4C}}\;.
\end{equation*}
We have thus proved that $\Pr[f_1+f_2=t] \le 5e^{-4C}/N$, as required.
\end{proof}

%%%%%%%%%%%%%%%%%%%%%%%%%%%%%%%%%%%%%%%%%%%%%%%%%%%
%%%%%%%%%%%%%%%%%%%%%%%%%%%%%%%%%%%%%%%%%%%%%%%%%%%

\section{Proof of the main result}

\subsection{Proof of the lower bound of Theorem \ref{t:snm}}
Let $\epsilon$ be given, $0<\epsilon <1$.  Let $n$  be sufficiently large and $m$ be given such that $n\leq m \leq n^{1/\epsilon}$. 
We shall construct a hypergraph $H$ on $n$ vertices and  $m$ hyperedges such that $s(H) \geq m^{2-\epsilon}$.  Let $r$ be a positive integer such that $\epsilon > 2/(r+1)$.
Recall that Lemma \ref{l:random} implies, for sufficiently large $N$ and any positive $\delta$, the existence of a hypergraph
$H'$ on $N$ vertices and  $N^{(r+1)/2 +\delta}$ hyperedges satisfying
$s(H') \ge cN^r$, for a constant $c=c(r)$.\footnote{We ignore rounding issues as these have no effect on the asymptotic statement of the theorem.}
Note that  $N^{(r+1)/2 +\delta}$  is slightly larger than the expression for the number of hyperedges  given in Lemma \ref{l:random}, but we can always add hyperedges if necessary as this does not decrease the parameter $s$.
Next, we choose $N$ such that $m =  N^{(r+1)/2 + \delta}$.  Thus  $H'$ has  $m$ hyperedges.
Note that:
$$
N \le (N^{(r+1)/2 + \delta})^{2/(r+1)} = m^{2/(r+1)} \le (n^{1/\epsilon})^{2/(r+1)} \le n,
$$
so by just adding $n-N$ isolated vertices to $H'$   we obtain a hypergraph $H$ with $n$ vertices and  $m$ hyperedges
and with $s(H) = s(H') \ge cN^r$. 
Hence for $\delta$ sufficiently small
$$
s(H) \ge cN^r \ge c (m^{2/(r+1+2\delta)})^r \ge m^{2-\epsilon}\;.
$$
\qed

\subsection{Proof of the upper bound of Theorem \ref{t:snm}}

Consider a hypergraph $H=(V,E)$ on $n$ vertices and $m$ hyperedges.  We shall argue that an appropriate random labeling is distinguishing with positive probability. 
Before we prove our upper bound $m^2/C$ on $s(H)$, we shall quickly remark that the upper bound $s(H) \leq m^2$ is easy to obtain. Indeed, to each vertex assign an integer value from $[m^2]$   independently with probability $1/m^2$. Consider the probability $p$ that two given distinct hyperedges $e$ and $e'$  get the same sum of the labels. Fix an arbitrary vertex $y$  in the symmetric difference of $e$ and $e'$. Then assuming that all other labels in the union of $e$ and $e'$ are fixed, there is at most one value of the label assigned to $y$ that makes the sum of labels in $e$ and $e'$ the same. Thus $p\leq 1/m^2$.  Taking the union bound over all $\binom{m}{2}$ pairs of hyperedges, we see that the probability that the labeling is not distinguishing is at most $\binom{m}{2}/m^2<1$. \\

Next we shall improve this easy upper bound to $s(n,m)=o(m^2)$. This turns out to require significantly more effort. We first describe the main idea of the proof.
We consider a hypergraph $H=(V,E)$ on $n$ vertices and $m$ hyperedges. Let $C>0$ and  $N= \lceil m^2/C \rceil$.  Consider  a labeling $f:V(H)\rightarrow [N]$ such that $f(v)$ is assigned randomly with $\Pr[f(v)=i]= 1/N$ for any $i\in [N]$ and assignment of values to distinct vertices is independent. Let, for any set $Q$ of vertices, $s(Q)$ denote $\sum_{v\in Q}f(v)$. For two hyperedges $e,e'$, let $X(e, e') = e\setminus e'$. Observe that a vertex labeling $f$ is distinguishing on $H$ if for any two hyperedges $s(X(e,e'))\neq s(X(e', e))$. Let $B(e,e')$ be the (bad) event that $s(X(e,e'))= s(X(e', e))$.

Consider  sets $D(e,e') = X(e,e')\cup X(e',e)$ and split the analysis into cases depending on the size of $D(e,e')$. For small $D(e,e')$  we would like to apply the Lov\'asz Local Lemma, but of course the lemma's dependency digraph might have a high degree if there are vertices that belong to many such $D(e,e')$’s, called ``dangerously popular'' vertices. We treat them first observing that there are not so many such vertices. Finally, we deal with large $D(e,e')$’s. For those we show that the bad event $B(e,e')$ does not happen by choosing a large set $S$ of size $2\ell$  in $X(e,e')$ or in $X(e',e)$, fixing the labels on the remaining vertices in $D(e,e')$ and showing that
$\Pr[B(e,e')] \leq  \Pr[s(S) =t]$ for a specific value $t$, finally upper-bounding the latter using Lemma \ref{l:random-sum}. We now proceed with the detailed proof.

\vspace{5pt} For our fixed $C$,  let $K > P > C$ where $K$ and $P$ are positive integer constants chosen to satisfy the claimed inequalities used in the proof. They will only depend on $C$. For the rest of the proof we assume that $C>3$ and note that if the theorem holds for some value of $C$, it holds for any smaller positive value of $C$.

\begin{itemize}
\item
A pair of hyperedges $e, e'$   is {\em dangerous} if $|D(e,e')| \le K$. Otherwise, the pair is called {\em non-dangerous}.
\item
We call a vertex $w \in V(H)$ {\em dangerously popular} if for at least $m^2/K^3$ dangerous pairs $e,e'$ it holds that $w \in D(e,e')$.  Let $S$ be the set of all dangerously popular vertices. 
\item
For a pair $e,e' \in E(H)$ (whether dangerous or not) let $Y(e,e')= X(e, e')\cap S$, the set of dangerously popular vertices in $X(e,e')$ and let $Z(e,e')=X(e,e') \setminus Y(e,e')$.
\item
We call a pair $e,e' \in E(H)$ {\em special} if each vertex of $D(e,e')$ is dangerously popular, i.e. $D(e,e')=Y(e,e')\cup Y(e',e)$.
\item
Two special pairs $e_1,e'_1$ and $e_2,e'_2$ are {\em equivalent} if $\{X(e_1,e'_1),X(e'_1,e_1)\}=\{X(e_2,e'_2),X(e'_2,e_2)\}$. Observe that ``equivalent'' is an equivalence relation over the special pairs.
\item
We call a non-dangerous and non-special pair $e,e' \in E(H)$ {\em newly dangerous} if all but at most $P$ vertices of $D(e,e')$ are dangerously popular (so $1 \le |Z(e,e')\cup Z(e',e)| \le P$ for such pairs).
\end{itemize}

We observe that that the number of dangerously popular vertices is  $|S| \le K^4$.
Indeed, the total sum of cardinalities of all the $D(e,e')$'s  ranging over all dangerous pairs is at most
$K\binom{m}{2}$ and as each dangerously popular vertex is counted at least $m^2/K^3$ times, there are at most
$K\binom{m}{2}/(m^2/K^3) \le K^4$ dangerously popular vertices.

Recall that $N=\lceil m^2/C \rceil$. Our assignment of values from $[N]$ to the vertices of $H$ proceeds in two steps.
We will first assign values to the dangerously popular vertices such that some properties are guaranteed.
We will then assign values to the remaining vertices.

\vspace*{6pt}\noindent 
{\bf Step 1:} Assign random values to the dangerously popular vertices (i.e. the vertices in $S$).
As in the proof of Lemma \ref{l:random-sum}, for the purpose of our analysis, the random values are assigned as follows.
Each $w \in S$ is assigned uniformly and independently a random {\em real} $g(w)$
in $[0,N]$. Then, we define $f(w)=\lceil g(w) \rceil$. Since $\Pr[f(w)=0]=0$, we have that $f(w)$ is discrete uniform in $[N]$
as the probability that $f(w)=t$ is $1/N$ for each $t \in [N]$.

\vspace*{6pt}\noindent
Recall that $Y(e,e') = X(e,e') \cap S$. Let $f(e,e') = \sum_{w \in Y(e,e')} f(w)$. We say that Step 1 is {\em successful} if both of the following hold:

1. For every special pair $e,e'$ we have $f(e,e') \neq f(e',e)$.

2. For at most $m^2e^{-4C}$ newly dangerous pairs $e,e'$ it holds that $|f(e,e')-f(e',e)| \le PN$.

 \begin{lemma}\label{l:successful}
 With positive probability, Step 1 is successful.
 \end{lemma}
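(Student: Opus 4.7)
The plan is to show that Conditions 1 and 2 each fail with probability less than $1/2$, and conclude by the union bound that Step 1 succeeds with positive probability.

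First I will handle Condition 1. For a special pair, $Y(e,e')$ and $Y(e',e)$ are disjoint subsets of $S$ (since $X(e,e')$ and $X(e',e)$ are disjoint), and equivalent special pairs yield the same random event $f(e,e')=f(e',e)$, so it suffices to bound the failure probability of one representative per equivalence class. Since $|S|\le K^4$, the number of such equivalence classes is at most a constant $K^*$ depending only on $K$ (crudely $3^{K^4}$: each vertex of $S$ either goes into $X(e,e')$, into $X(e',e)$, or into neither, and we identify ordered with unordered pairs). For each representative, because $e\ne e'$, at least one of $Y(e,e')$ or $Y(e',e)$ is nonempty; conditioning on all but one label in the nonempty side, the event $f(e,e')=f(e',e)$ forces a single value for the remaining label, which has probability $1/N$. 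A union bound then gives $\Pr[\text{Condition 1 fails}]\le K^*/N=o(1)$, since $N\ge m^2/C\to\infty$.

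Next I will handle Condition 2 by Markov's inequality. For a newly dangerous pair, $|Y(e,e')\cup Y(e',e)|=|D(e,e')|-|Z(e,e')\cup Z(e',e)|>K-P$, and since $Y(e,e')\cap Y(e',e)=\emptyset$, at least one of $|Y(e,e')|$, $|Y(e',e)|$ is at least $(K-P)/2$; possibly swapping the roles of $e$ and $e'$ (the condition is symmetric), I may assume $|Y(e,e')|\ge (K-P)/2$. I will introduce an auxiliary constant $C'>C$ to be tuned at the end, and choose $K$ large enough that $(K-P)/2\ge 2\ell_0(C')$, where $\ell_0(\cdot)$ is the threshold from Lemma \ref{l:random-sum}. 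Conditioning on $f(e',e)$ and on all but $2\ell_0(C')$ of the random labels in $Y(e,e')$, Lemma \ref{l:random-sum} yields $\Pr[f(e,e')=t]\le 5/(e^{4C'}N)$ for every integer $t$. Summing this pointwise bound over the $2PN+1$ integers in $[f(e',e)-PN,\,f(e',e)+PN]$ gives $\Pr[|f(e,e')-f(e',e)|\le PN]\le 11P/e^{4C'}$. Picking $C'$ so that $11P/e^{4C'}\le e^{-4C}/2$ (e.g.\ $C'\ge C+\frac{1}{4}\log(22P)$), the expected number of offending newly dangerous pairs is at most $m^2 e^{-4C}/2$, so Markov gives $\Pr[\text{Condition 2 fails}]\le 1/2$.

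The main obstacle is the consistent calibration of the constants $K$, $P$, $C'$. The $O(PN)$ factor paid when summing the pointwise bound over an interval of length $2PN$ must be absorbed by choosing $C'$ sufficiently large relative to $C$ and $P$; this in turn inflates $\ell_0(C')$ and forces $K$ to be taken very large relative to $P$. All three choices depend only on $C$ (which is fixed from the outset), so the hierarchy $K>P>C$ imposed in the setup is preserved. The remaining steps—namely the disjointness of $Y(e,e')$ and $Y(e',e)$, the independence of $f(e,e')$ and $f(e',e)$, and the application of Lemma \ref{l:random-sum} to a subset of variables via conditioning on the rest—are routine once the calibration is in place.
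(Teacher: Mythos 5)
Your proof is correct, and for Condition 1 it coincides with the paper's argument: reduce to one representative per equivalence class (a constant number of classes, since $|S|\le K^4$), bound each failure event by $1/N$ via the single-free-label observation, and take a union bound. For Condition 2 the overall scheme (a per-pair anti-concentration bound followed by Markov's inequality) is also the paper's, but your anti-concentration step is genuinely different. The paper works directly with the interval: it normalizes $f(e,e')$ to an Irwin--Hall sum $g(e,e')$, uses concavity to center the window at the mean, and invokes the weak law of large numbers to make $\Pr[g(e,e')\in[\ell/2-2P,\ell/2+2P]]$ small once $K$ (hence $\ell$) is large as a function of $P$ and $C$. You instead recycle Lemma \ref{l:random-sum} as a pointwise bound and sum it over the $2PN+1$ integers of the window; the factor $O(P)$ lost in that summation is absorbed by running the lemma at an inflated constant $C'=C+\tfrac14\log(22P)$. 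Both calibrations are legitimate --- $P$ depends only on $C$, hence so do $C'$, $\ell_0(C')$ and then $K$, preserving the hierarchy $K>P>C$ --- and your route has the mild advantage of needing no second anti-concentration argument beyond Lemma \ref{l:random-sum}, at the price of the auxiliary constant. One cosmetic point: to invoke Lemma \ref{l:random-sum} you must leave $2\ell$ labels free with $\ell>\ell_0(C')$ strictly, so keep $2(\ell_0(C')+1)$ of them rather than $2\ell_0(C')$; this changes nothing of substance.
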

 Lemma \ref{l:successful} will be proved later, but for now assume that it holds,
 so fix an assignment of the vertices of $S$ such that Step 1 is successful.

\vspace*{6pt}\noindent
{\bf Step 2:} Assign random values to the remaining $n-|S|$ vertices.
As in Step 1, we assign the random values are follows.
Each $w \in V(H) \setminus S$ is assigned uniformly and independently a random {\em real} $g(w)$
in $[0,N]$. Then, we define $f(w)=\lceil g(w) \rceil$. Recall that $f(w)$ is discrete uniform in $[N]$.
This now defines for each hyperedge $e \in E(H)$ the sum $s(e)=\sum_{w \in e} f(w)$. 
We need to estimate the probability that $s(e)=s(e')$ for distinct hyperedges $e, e'$.
We partition the  pairs $(e, e')$ of hyperedges  into five types:
\begin{enumerate}[(a)]
\item  The special pairs.
\item    The newly dangerous pairs for which  $|f(e,e')-f(e',e)| > PN$.
\item  The newly dangerous pairs for which $|f(e,e')-f(e',e)| \le PN$.
\item   Non-dangerous pairs that are not newly dangerous and not special.
\item  Dangerous pairs that are not special.
\end{enumerate}

\vspace{6pt}\noindent
We refer to these types by their letter. Each pair of hyperedges is  of precisely one of these types. We now analyze each type.
Let $A_a, A_b, A_c, A_d, $ and $A_e$ be events that there is a pair $e, e'$ of type (a), (b), (c), (d), or (e), respectively, such that $s(e)=s(e')$.
We prove the following lemmas later.
\begin{lemma}\label{l:type-a-b}
$\Pr[A_a]=\Pr[A_b]=0$.
\end{lemma}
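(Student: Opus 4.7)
The plan is to show that both $A_a$ and $A_b$ are impossible deterministically, conditional on the assumed successful Step 1. The common starting point is the identity $s(e) - s(e') = s(X(e,e')) - s(X(e',e))$ together with the decomposition $s(X(e,e')) = f(e,e') + s(Z(e,e'))$ obtained by splitting $X(e,e')$ into its dangerously-popular part $Y(e,e') = X(e,e') \cap S$ (labeled in Step 1) and its remainder $Z(e,e') = X(e,e') \setminus S$ (labeled in Step 2); the same decomposition holds for $X(e',e)$. So after Step 1 is fixed, the only randomness in $s(e) - s(e')$ lives in $s(Z(e,e')) - s(Z(e',e))$.

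For type (a), the idea is to argue that specialness forces $Z(e,e') = Z(e',e) = \emptyset$. By definition $D(e,e') = Y(e,e') \cup Y(e',e) \subseteq S$, and since $X(e,e')$ and $X(e',e)$ are disjoint subsets of $D(e,e')$ that contain $Y(e,e')$ and $Y(e',e)$ respectively, a cardinality comparison forces $X(e,e') = Y(e,e')$ and $X(e',e) = Y(e',e)$. Then $s(e) - s(e')$ collapses to $f(e,e') - f(e',e)$, which the first condition in the definition of a successful Step 1 guarantees is nonzero. Hence $A_a$ cannot occur.

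For type (b), the plan is to apply the triangle inequality bluntly. Since newly dangerous pairs have $|Z(e,e') \cup Z(e',e)| \le P$ and the two sets are disjoint, $|Z(e,e')| + |Z(e',e)| \le P$; because every Step-2 label lies in $[N]$, this gives $|s(Z(e,e')) - s(Z(e',e))| \le PN$. Combined with the defining inequality $|f(e,e') - f(e',e)| > PN$, which means $\ge PN + 1$ by integrality, the reverse triangle inequality yields $|s(e) - s(e')| \ge 1$, so $A_b$ cannot occur either.

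The hard part of the upper-bound argument lies elsewhere, namely in Lemma \ref{l:successful} (which must simultaneously ensure $f(e,e') \ne f(e',e)$ for every special pair and control the count of newly dangerous pairs with $|f(e,e') - f(e',e)| \le PN$) and in the stochastic analyses of the remaining types (c)--(e), where Lemma \ref{l:random-sum} enters. Lemma \ref{l:type-a-b} itself is essentially bookkeeping: the only point to verify carefully is that the strict inequality $|f(e,e') - f(e',e)| > PN$ beats the Step-2 slack $PN$ by the full integer gap, which it does because all labels, cardinalities, $P$, and $N$ are integers.
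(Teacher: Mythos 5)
Your proposal is correct and follows essentially the same route as the paper: for type (a) you observe that specialness forces $Z(e,e')=Z(e',e)=\emptyset$ so that $s(e)-s(e')=f(e,e')-f(e',e)\neq 0$ by the first success condition, and for type (b) you bound the total Step-2 contribution of $Z(e,e')\cup Z(e',e)$ by $PN$ and note that the gap $|f(e,e')-f(e',e)|>PN$ survives; the paper does exactly this (bounding $s(Z(e',e))\le PN$ and dropping $s(Z(e,e'))\ge 0$), and your extra remark about integrality is harmless but not even needed since the strict inequality already yields $s(e)-s(e')\ge f(e,e')-f(e',e)-PN>0$.
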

\begin{lemma}\label{l:type-c}
$\Pr[A_c] \le e^{-3C}$.
\end{lemma}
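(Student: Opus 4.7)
The plan is to reduce the event $s(e)=s(e')$ for a type (c) pair to a linear constraint on the Step~2 labels and then use the bound on the number of type (c) pairs provided by Lemma \ref{l:successful}. Recall that for any pair $e,e'$, we have $s(e)=s(e')$ iff $s(X(e,e'))=s(X(e',e))$. Writing $X(e,e')=Y(e,e')\cup Z(e,e')$ (disjoint union, with $Y(e,e')\subseteq S$ and $Z(e,e')\cap S=\emptyset$), and using $f(e,e')=\sum_{w\in Y(e,e')}f(w)$, the event $s(e)=s(e')$ is equivalent to
$$
s(Z(e,e')) - s(Z(e',e)) \;=\; f(e',e)-f(e,e').
$$
For a type (c) pair, the right-hand side is a fixed integer, already determined by Step~1.

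Next I would bound, for each fixed type (c) pair, the probability over Step~2 that the above equation holds. Since the pair is newly dangerous, $|Z(e,e')\cup Z(e',e)|\ge 1$, and since $X(e,e')$ and $X(e',e)$ are disjoint, the vertices of $Z(e,e')\cup Z(e',e)$ are distinct. In Step~2 they receive independent uniform labels from $[N]$, so conditioning on all but one of these labels leaves at most one label value that satisfies the equation. Hence the conditional, and therefore unconditional, probability is at most $1/N$.

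Finally I would take a union bound over all type (c) pairs. By Lemma \ref{l:successful}, the number of newly dangerous pairs with $|f(e,e')-f(e',e)|\le PN$ is at most $m^2 e^{-4C}$, which is an upper bound on the number of type (c) pairs. Therefore
$$
\Pr[A_c] \;\le\; m^2 e^{-4C}\cdot \frac{1}{N} \;\le\; m^2 e^{-4C}\cdot \frac{C}{m^2} \;=\; C\,e^{-4C}.
$$
Since $C>3$ and $e^x>x$ for all real $x$, we have $C<e^{C}$, so $C\,e^{-4C}<e^{-3C}$, giving the required bound. The only subtle point in this plan is making sure the independence used in the per-pair estimate is rigorous, i.e.\ that the labels of the $Z$-vertices are genuinely independent of the Step~1 quantities and of each other; this follows directly from the two-stage construction of $f$.
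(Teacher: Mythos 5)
Your proposal is correct and follows essentially the same route as the paper: the per-pair bound of $1/N$ is exactly the paper's observation (\ref{star}) (you simply make explicit the point, which the paper leaves implicit, that the free vertex can be taken in $Z(e,e')\cup Z(e',e)$, which is nonempty for a newly dangerous pair and whose labels are drawn in Step 2), and the union bound over the at most $m^2e^{-4C}$ type (c) pairs guaranteed by the success of Step 1 gives $Ce^{-4C}\le e^{-3C}$ just as in the paper.
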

\begin{lemma}\label{l:type-d}
$\Pr[A_d] \le e^{-3C}$.
\end{lemma}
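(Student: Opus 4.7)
The plan is to reduce, for any type (d) pair $(e,e')$, the event $s(e)=s(e')$ to an anti-concentration question about a sum of $2\ell$ i.i.d.\ uniform random variables on $[N]$, and then to invoke Lemma \ref{l:random-sum}. First, fix any outcome of Step~1 guaranteed by Lemma \ref{l:successful}. Since $X(e,e')$ and $X(e',e)$ are disjoint and decompose as $Y \cup Z$, the equality $s(e)=s(e')$ is equivalent to
$$
\sum_{v\in Z(e,e')} f(v) \;-\; \sum_{v\in Z(e',e)} f(v) \;=\; f(e',e)-f(e,e'),
$$
where the right-hand side is an integer fully determined by Step~1, while the labels on $Z(e,e')\cup Z(e',e)$ are still to be assigned in Step~2 as independent uniform variables on $[N]$.

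Because the pair is non-dangerous, not newly dangerous, and not special, we have $|Z(e,e')\cup Z(e',e)|>P$. Since $Z(e,e')$ and $Z(e',e)$ are disjoint, one of them --- say $Z(e,e')$ without loss of generality --- has size at least $\lceil (P+1)/2\rceil$. I would choose the constant $P$ (as a function of $C$) so that $P\ge 4\ell_0(C)+4$, where $\ell_0(C)$ is the threshold from Lemma \ref{l:random-sum}; then we may fix some $\ell>\ell_0(C)$ with $2\ell\le |Z(e,e')|$, pick an arbitrary $2\ell$-subset $W\subseteq Z(e,e')$, and condition on the values of $f$ on $Z(e',e)$ and on $Z(e,e')\setminus W$. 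After this conditioning, the event $s(e)=s(e')$ becomes $X_1+\cdots+X_{2\ell}=t$ for a fixed integer $t$, with $X_1,\ldots,X_{2\ell}$ i.i.d.\ uniform on $[N]$. Since $n$ (and therefore $N=\lceil m^2/C\rceil$) is assumed sufficiently large, Lemma \ref{l:random-sum} applies and bounds this conditional probability by $5/(e^{4C}N)$. The bound is uniform in the conditioning, so $\Pr[s(e)=s(e')]\le 5/(e^{4C}N)$ unconditionally.

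A union bound over the at most $\binom{m}{2}$ type (d) pairs then gives
$$
\Pr[A_d]\;\le\;\binom{m}{2}\cdot\frac{5}{e^{4C}N}\;\le\;\frac{5C}{2e^{4C}}\;\le\; e^{-3C},
$$
where the last inequality is equivalent to $5C/2 \le e^C$, which is comfortable in our regime $C>3$.

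The main technical point is simply the choice of the constants: $P$ must be taken large enough in terms of $C$ to guarantee both $2\ell\le\lceil(P+1)/2\rceil$ and $\ell>\ell_0(C)$, which is why $P$ is declared at the outset to depend only on $C$. Everything else is a direct consequence of the splitting $X=Y\cup Z$, the independence between Steps~1 and~2, and the anti-concentration bound of Lemma \ref{l:random-sum}; no part of the argument requires more than routine manipulation once these pieces are in place.
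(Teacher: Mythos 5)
Your proposal is correct and follows essentially the same route as the paper: reduce $s(e)=s(e')$ to an equation on the not-dangerously-popular vertices, extract a $2\ell$-subset of the larger $Z$-side (the paper takes $\ell=\lfloor P/4\rfloor$ and $|Z(e,e')|\ge P/2$, matching your constant bookkeeping), condition on all other labels, apply Lemma \ref{l:random-sum}, and union bound over the at most $\binom{m}{2}$ pairs to get $\frac{5C}{2e^{4C}}\le e^{-3C}$. Your explicit remark that $P$ must exceed $4\ell_0(C)$ is exactly the role of the paper's stipulation that $P$ is a constant ``chosen to satisfy the claimed inequalities used in the proof.''
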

\begin{lemma}\label{l:type-e}
$\Pr[A_e] \le 1-e^{-2C}$.
\end{lemma}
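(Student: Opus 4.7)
The approach is to apply the asymmetric Lov\'asz Local Lemma (LLL) to the family of bad events $\{B(e,e')\}$ indexed by the type (e) pairs $(e,e')$, where $B(e,e')$ is the event $s(e)=s(e')$. Two ingredients are needed: a uniform upper bound $p$ on each $\Pr[B(e,e')]$ and an upper bound $d$ on the dependency degree of each such event.

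For the probability bound, fix a type (e) pair. Since it is dangerous, $|D(e,e')|\le K$; since it is not special, there exists a non-popular vertex $v\in Z(e,e')\cup Z(e',e)$. Conditioning on the Step~1 labels of all popular vertices in $D(e,e')$ and on the Step~2 labels of the remaining non-popular vertices in $D(e,e')\setminus\{v\}$, the equation $s(e)=s(e')$ forces $f(v)$ to equal one specific integer, so $\Pr[B(e,e')]\le 1/N\le p$ with $p:=(C+o(1))/m^2$. For the dependency bound, observe that $B(e,e')$ is measurable with respect to the Step~2 values on $Z(e,e')\cup Z(e',e)\subseteq V(H)\setminus S$, a set of cardinality at most $K$. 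By the defining property of ``not dangerously popular,'' each vertex $w\notin S$ lies in $D(\cdot,\cdot)$ for strictly fewer than $m^2/K^3$ dangerous pairs; consequently $B(e,e')$ is mutually independent of all but at most $d:=K\cdot m^2/K^3=m^2/K^2$ other type (e) events.

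I then apply the LLL with the uniform choice $x_i:=ep$ for every type (e) event. Since all relevant $x_j$'s are equal and there are at most $d$ of them, the LLL hypothesis is implied by $p\le ep(1-ep)^d$, i.e.\ $(1-ep)^d\ge 1/e$, which holds for large $m$ as soon as $K^2\ge eC$; this is automatic because the setup permits $K>C$ and $C$ may be taken large. Since the number of type (e) events is at most $\binom{m}{2}\le m^2/2$, the LLL yields
$$
\Pr[\overline{A_e}]\;\ge\;\prod_i(1-x_i)\;=\;(1-ep)^{|I|}\;\ge\;\Bigl(1-\tfrac{eC}{m^2}(1+o(1))\Bigr)^{m^2/2}\xrightarrow[m\to\infty]{}e^{-eC/2}.
$$
Because $e/2<2$, we have $e^{-eC/2}>e^{-2C}$, so for sufficiently large $m$ the right-hand side exceeds $e^{-2C}$, which rearranges to the desired $\Pr[A_e]\le 1-e^{-2C}$.

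The main point that requires genuine care is verifying that $B(e,e')$ depends only on the Step~2 randomness at $Z(e,e')\cup Z(e',e)$: once Step~1 is fixed, the popular vertices of $D(e,e')$ contribute a deterministic additive shift to $s(e)-s(e')$, so the residual independence structure is governed entirely by the non-popular vertices, which is exactly what feeds both the probability and dependency estimates. The remaining work is quantitative bookkeeping --- ensuring the strict margin $e/2<2$ absorbs the $(1+o(1))$ error from the asymptotic expansion and that the constants $K,P$ fixed in the setup are simultaneously compatible with the LLL condition $K^2\ge eC$ and with the constraints imposed by the earlier lemmas on types (a)--(d).
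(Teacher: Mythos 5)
Your proposal is correct and follows essentially the same route as the paper: the same per-event bound $\Pr[B(e,e')]\le 1/N$ via conditioning on all but one non-popular vertex of the symmetric difference, the same dependency bound $K\cdot m^2/K^3=m^2/K^2$ coming from the fact that the event is determined by the Step-2 labels on $Z(e,e')\cup Z(e',e)$ and no such vertex is dangerously popular, and then the Lov\'asz Local Lemma. The only difference is cosmetic --- you take $x_i=ep$ where the paper takes $x=2/N$ --- and both choices verify the LLL hypothesis and yield a product bound exceeding $e^{-2C}$.
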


Lemmas \ref{l:type-a-b}, ~\ref{l:type-c},~\ref{l:type-d}, and \ref{l:type-e}
imply  that $\Pr [A_a \cup A_b \cup A_c \cup A_d \cup A_e] \leq e^{-3C}+e^{-3C}+1-e^{-2C} < 1$. 
Thus, with positive probability none of these bad events happen and there is a desired distinguishing labeling of $H$.
It remains to prove Lemmas  \ref{l:successful},  \ref{l:type-a-b},  \ref{l:type-c}, \ref{l:type-d}, and \ref{l:type-e}.\\

In several proofs we shall need the following observation for any distinct subsets $X$ and $X'$ of vertices,  recalling that $s(X)=\sum_{w\in X} f(w)$,
\begin{equation}\label{star} 
\Pr(s(X)=s(X'))\leq 1/N.
\end{equation}
The reason for this observation to hold is the same as we outlined in the first paragraph of the proof -  fixing all labels except for one vertex, say  $y$,  in the symmetric difference of $X$ and $X'$, we see that $\Pr(s(X)=s(X'))\leq \Pr(f(y)=t)=1/N$, for some specific value $t$.

\begin{proof}[Proof of Lemma \ref{l:type-a-b}]
If $e,e'$ is a  pair of type (a), then clearly $s(e)-s(e')=f(e,e')-f(e',e)$. But since Step 1 is successful, we have that $f(e,e') \neq f(e',e)$ and hence $s(e) \neq s(e')$. Thus the event $A_a$ never happens.

If $e, e'$ is  a pair of type (b), i.e., a newly-dangerous pair for which $ |f(e,e')-f(e',e)| > PN$  we proceed as follows.
Assume without loss of generality that  $f(e,e')-f(e',e) > PN$.
Clearly
$$
s(e) = f(e,e')+\sum_{w \in e \cap e'} f(w) + \sum_{w \in Z(e,e')} f(w) \geq f(e,e')+\sum_{w \in e \cap e'} f(w)\;.
$$
On the other hand,
$$
s(e') = f(e',e)+\sum_{w \in e \cap e'} f(w) + \sum_{w \in Z(e',e)} f(w) \leq  f(e',e)+\sum_{w \in e \cap e'} f(w) + PN,
$$
because $|Z(e',e)| \le P$ by the definition of newly-dangerous.
It follows from the last two inequalities that
$$
s(e)-s(e') \ge  f(e,e') - f(e',e) - PN > 0,
$$
so we have that $s(e) \neq s(e')$. Thus the event $A_b$ never happens.
\end{proof}

\begin{proof}[Proof of Lemma \ref{l:type-c}]
Let $e,e'$ be a pair of type (c), namely a newly dangerous pair for which it holds that $|f(e,e')-f(e',e)| \le PN$. As Step 1 is successful, we have that the number of pairs of type (c) is at most $m^2e^{-4C}$.

By (\ref{star}), we have that $\Pr[s(e)=s(e')] \le 1/N$. Since the number of pairs of type (c) is at most $m^2e^{-4C}$ we have that
$$
\Pr[A_c] \le \frac{m^2e^{-4C}}{N} = \frac{m^2e^{-4C}}{\lceil m^2/C \rceil}  \le Ce^{-4C} \le e^{-3C}.
$$
\end{proof}

\begin{proof}[Proof of Lemma \ref{l:type-e}]
For a pair $e,e'$ of type (e), let $A(e, e')$ be the event that $s(e)=s(e')$. Using (\ref{star}) we have $\Pr[A(e,e')] \le 1/N$.
Letting $L$ denote the set of pairs of type (e), our goal is to prove that $\Pr[\cap_{\{e,e'\} \in L} \overline{A(e,e')}] \ge e^{-2C}$ as this is equivalent to proving that $\Pr[A_{e}] \le 1-e^{-2C}$.
To this end, we will use the Lov\'asz Local Lemma (LLL).
Consider the dependency digraph on the events $A(e,e')$ (note: there could be as many as $\binom{m}{2}$ such events).
We claim that any event $A(e,e')$ depends on not too many other events.
Indeed, if $Z(e_1,e_1') \cup Z(e_1',e_1)$ is disjoint from $Z(e_2,e_2') \cup Z(e_2',e_2)$, then the event $A(e_1,e_1')$ is independent of the event $A(e_2,e_2')$ as they involve assignment of values to disjoint sets of vertices.
Recall that the pairs of  type (e) are, in particular, dangerous pairs.
Hence $|Z(e,e') \cup Z(e',e)| \le K$, for any pair $e, e'$ of type (e). Furthermore, each vertex of $Z(e,e') \cup Z(e',e)$ is not dangerously popular.
Thus, we have that $A(e,e')$ is independent of all but at most $K \cdot m^2/K^3=m^2/K^2$ other events.  Denote $\{e_1,e_1'\} \sim \{e_2,e_2'\}$ if $Z(e_1,e_1') \cup Z(e_1',e_1)$ is not disjoint from $Z(e_2,e_2') \cup Z(e_2',e_2)$. To apply LLL, define $x(e,e')=2/N$. For any $e_1,e_2$ of type (e) it now holds that
$$
x(e_1,e_2)\Pi_{\{e_1',e_2'\} \sim \{e_1,e_2\}}(1-x(e_1',e_2')) \ge \frac{2}{N}\left(1-\frac{2}{N}\right)^{m^2/K^2}
$$
$$
\ge 
\frac{2}{N}\left(1-\frac{2}{N}\right)^{CN/K^2} > \frac{1}{N} \ge \Pr[A(e_1,e_2)]
$$
so the condition in the statement of LLL holds. So, by the LLL, we have that
$$
\Pr[\cap_{\{e,e'\} \in L} \overline{A(e,e')}] \ge (1-x(e,e'))^{|L|} \ge \left(1-\frac{2}{N}\right)^{m^2/2} \ge e^{-2C},
$$
as required. 
\end{proof}

\begin{proof}[Proof of Lemma \ref{l:successful}]
We first prove that with probability at least $2/3$, for every special pair $e,e'$ we have $f(e,e') \neq f(e',e)$.
Observe that the number of equivalence classes in the ``equivalent'' relation is at most
$2^{|S|}2^{|S|} \le 4^{K^4}$ (namely, a constant).
Since for two equivalent special pairs $ e_1,e'_1$ and $e_2,e'_2$ we have that
$f(e_1,e_1') \neq f(e_1',e_1)$ if and only if $f(e_2,e_2') \neq f(e_2',e_2)$,
it suffices to consider a representative special pair from every equivalence class. Now, if $e,e'$ is a special pair then,
using (\ref{star})
we have that $\Pr[f(e,e')=f(e',e)] \le 1/N$. We have by the union bound that the probability that for some special pair $f(e,e')=f(e',e)$ is at most $4^{K^4}/N \ll 1/3$.
So, with probability at least $2/3$, for every special pair $e,e'$ we have $f(e,e') \neq f(e',e)$.

We next prove that with probability at least $2/3$, for at most $m^2e^{-4C}$ newly dangerous pairs it holds that $|f(e,e')-f(e',e)| \le PN$
(thus, we will have that Step 1 is successful with probability at least $1-(1-2/3)-(1-2/3) > 0$, as required).
To prove this we will need to establish some ``anti-concentration'' result, and this will be possible by applying the law of large numbers to some appropriate random variable.

Let us fix a newly dangerous pair $u,v$. We know that $u,v$ is not a dangerous pair, namely $|D(e,e')| \ge K$.
On the other hand, we know that $D(e,e')$ contains many dangerously
popular vertices, since $1 \le |Z(e,e')| \le P$. So, either $|Y(e, e')| \ge (K-P)/2 \ge K/4$ or else $|Y(e',e)| \ge (K-P)/2 \ge K/4$.
Assume without loss of generality that $|Y(e,e')| \ge K/4$. Now, suppose we are given that $f(e',e)=t$ for some integer $t$. Given this information, we would like to upper bound the probability that $f(e,e')$ lies in $[t-PN, t+PN]$. If we can provide an upper bound which does not depend on $t$, then we have upper-bounded the probability that $|f(e,e')-f(e',e)| \le PN$ regardless of any given information.

So, consider indeed the random variable $f(e,e')$.
It is the sum of $\ell=|Y(e,e')| \ge K/4$ i.i.d. random variables, namely $f(e,e')=X_1+\cdots+X_\ell$ where each $X_i$ is discrete uniform in $[N]$. It will be slightly more convenient to normalize as follows.
Recall that each $X_i$ corresponds to some $f(w)$ for $w \in Y(e,e')$ and that $f(w)=\lceil g(w) \rceil$.
Hence $X_i$ is determined by first selecting uniformly at random a real number $W_i$ in $[0,N]$
and then setting $X_i = \lceil W_i \rceil$. Define $U_i=W_i/N$ and notice that
$U_i \sim U[0,1]$ and that $X_i=\lceil NU_i \rceil$.

Let $g(e,e')=U_1+\cdots+U_\ell$ and observe that $f(e,e')-\ell \le N g(e,e') \le f(e,e')$.
Thus, it suffices to upper bound the probability that $g(e,e')$ lies in $[t/N-P-\ell/N, t/N+P]$.
As $\ell/N \le K/N \le P$, it suffices to upper bound the probability that $g(e,e')$ lies in $[t^*-2P, t^*+2P]$
for some real number $t^*$. As $U_1+\cdots+U_\ell$ is an Irwin–Hall distribution which is concave in $[0,\ell]$,
the latter probability is maximized when $t^*=\ell/2$, so it remains to upper bound the probability that
$g(e,e')$ lies in $[\ell/2 - 2P, \ell/2+2P]$. As the $U_i$ are i.i.d. each having mean $\frac{1}{2}$ and standard deviation $1/\sqrt{12}$ (i.e. absolutely bounded standard deviation), the (weak) law of large numbers applies to their sum $g(e,e')$, namely for every constant $P$
$$
\lim_{\ell \rightarrow \infty} \Pr[g(e,e') \in [\ell/2 - 2P, \ell/2+2P]] = 0\;.
$$
This, in turn, means that for all $K$ sufficiently large as a function of $P,C$
(hence all $\ell$ sufficiently large since $\ell \ge |K|/4)$, 
$$
\Pr[g(e,e') \in [\ell/2 - 2P, \ell/2+2P]] \le \frac{1}{3}e^{-4C}\;.
$$
We have thus proved that $\Pr[|f(e,e') - f(e',e)| \le PN] \le e^{-4C}/3$.
As there are less than $m^2$ pairs to consider, we have that the expected number of newly dangerous pairs satisfying $ |f(e,e')-f(e',e)| \le PN$ is at most
$m^2e^{-4C}/3$. By Markov's inequality the probability that there are more than $m^2e^{-4C}$ such pairs is less than $1/3$,
so indeed with probability at least $2/3$, for at most $m^2e^{-4C}$ newly dangerous pairs it holds that $|f(e,e')-f(e',e)| \le PN$.
\end{proof}

\begin{proof}[Proof of Lemma \ref{l:type-d}]
Let $e,e'$ be a pair of type (d), namely it is a non-dangerous pair and is not newly dangerous nor special.
We will prove that $\Pr[s(e)=s(e')] \le e^{-3C}/m^2$.
The lemma then follows as there are less than $m^2$ such pairs to consider.
Being non-dangerous and not newly dangerous means that $|Z(e,e') \cup Z(e',e)| \ge P$.
Assume without loss of generality that $|Z(e,e')| \ge P/2$.
Let $\ell=\lfloor P/4 \rfloor$ and let 
$Z$ be a subset of $Z(e, e')$ of size $2\ell$.

Suppose we are given the value of $f(w)$ for all $w \in V(H) \setminus Z$.
Then, conditioned on this information, for $s(e)=s(e')$ to hold, $s(Z)$ must avoid a particular value $t$.
By Lemma \ref{l:random-sum} we have that $\Pr[s(Z)=t]\leq 5/(e^{4C}N)$.
Thus using the union bound over all pairs of hyperedges of type (d), we have that 
$$
\Pr[A_d]\leq \frac{m^2}{2} \frac{5}{e^{4C}N} \leq \frac{5C}{e^{4C}} \le e^{-3C}\;.
$$
\end{proof}

%%%%%%%%%%%%%%%%%%%%%%%%%%%%%%%%%%%%%%%%%%%%%%%%%%%%%%%%%%%%%%%%%%%%%%%%%
%%%%%%%%%%%%%%%%%%%%%%%%%%%%%%%%%%%%%%%%%%%%%%%%%%%%%%%%%%%%%%%%%%%%%%%%%

\section{Proofs of Theorems \ref{t:Delta}, \ref{t:trees}, \ref{t:irregular}.}

\subsection{Proof of Theorem \ref{t:Delta}}

Let $G$ be a nonempty $n$-vertex graph with  maximum degree $\Delta$, minimum degree $\delta$, and the largest number of  vertices with pairwise distinct closed neighbourhoods equal to  $n'$.
We aim to show that
$$
\frac{n'+ \delta}{\Delta +1} \leq s^*(G) \leq  \max \{(n-d(v)-1)(d(v)+1)+2\,:\,v\in V(G)\}\;.
$$
For a vertex labeling $f$, we say that a pair of vertices $u, v$ is {\it bad} if $N[u]\neq N[v]$ and $s^*_f(u)=s^*_f(v)$, otherwise the pair is {\it good}. Thus, a labeling is vertex sum-distinguishing if all pairs are good.
Let $\xi =  \max \{(n-d(v)-1)(d(v)+1)+2 : ~ v\in V(G)\}$, where $d(v)$ is the degree of vertex $v$. Consider a labeling $f:V(G) \rightarrow [\xi]$ with a smallest number of bad pairs. We argue that the number of bad pairs is, in fact, zero.

If not, let $u,v$ be a bad pair. Let $x=u$ if $u$ and $v$ are not adjacent and otherwise let $x=w$, for some $w\in (N(v)\setminus N(u))\cup (N(u)\setminus N(v))$. Note that changing the label for $x$ makes the pair $u,v$  good.
We shall change the label of $x$ such that no good pair becomes bad, i.e., so that the number of bad pairs decreases. Denote the new labeling $f'$.  Let $t$ be a new value assigned to $x$, i.e., $t\neq f(x)$, 
$f'(x) =t$, $f'(z)=f(z)$,  for any $z\in V(G)-x$.

We see that $s^*_{f'}(y)=s^*_f(y)$ if $y\not\in N[x]$ and $s^*_{f'}(y)=s^*_f(y)-f(x)+t$ if $y\in N[x]$.
Thus $s^*_{f'}(y) \neq s^*_{f'}(y')$ if $s^*_f(y)\neq s^*_f(y')$ and ($y, y' \in N[x]$ or $y,y'\in V(G) -N[x])$. 
We have that $s^*_{f'}(y)=s^*_{f'}(y')$ for $y\in N[x]$ and $y'\not\in N[x]$ if and only if $s^*_f(y) -f(x)+t= s^*_f(y')$. So, a new bad pair can only appear if one vertex is in $N[x]$ and another is not.

Choose $t \in Q$, where
$$
Q= [\xi] \setminus \left( \{f(x)\}\cup  \{ s^*_f(y')-s^*_f(y)+f(x): ~y\in N[x], y'\not\in N[x]\}\right).
$$
Since 
\begin{eqnarray*}
|\{ s^*_f(y')-s^*_f(y)+f(x): y\in N[x], y'\not\in N[x]\}|&\leq &|\{ (y,y'): y\in N[x], y'\not\in N[x]\}|\\
& = &(n-d(x)-1)(d(x)+1)\\
&\leq & \xi-2,
\end{eqnarray*}
 the set $Q$ is non-empty, so there is a choice of $t\leq \xi$,  such that  $t\neq f(x)$ and  $s^*_{f'}(y)\neq s^*_{f'}(y')$ for any $y\in N[x]$ and $y'\not\in N[x]$.
Since there is no bad pair $y, y'$ for $y, y'\in N[x]$ or $y, y'\not\in N[x]$  in $f'$  that was not bad in $f$
and the pair $u,v$ that was bad in $f$ is no longer bad in $f'$, we see that the number of bad pairs in $f'$ is strictly less than the number of bad pairs in $f$, a contradiction.

For the lower bound, observe that if $f$ is a vertex sum-distinguishing labeling of $G$ with the largest label $k$, then $S= \{s^*_f(v): v\in V(G)\}\subseteq \{ (\delta(G)+1)\cdot 1, \ldots, (\Delta+1)\cdot k\}$. Since $|S|\geq n'$, we have $n' \leq |S|\leq (\Delta+1)k- (\delta +1) +1$, giving the desired lower bound.
 
Note that the lower bound is tight for any  pair $\delta, \Delta$, $\delta\leq \Delta$. If $\delta =\Delta$ consider $G=K_{\Delta+1}$,  for which $n'=1$, $s^*(G)=1$, and $(n'+\delta)/(\Delta+1)=1$.
If $\delta<\Delta$, consider $G$ that is a vertex-disjoint union of $K_{\Delta +1}$ and $K_{\delta +1}$. In this case $n'=2$, $s^*(G)=1$, and $\lceil (n'+\delta)/(\Delta+1)\rceil =1$.
\qed
 
\subsection{Proof of Theorem \ref{t:trees}}

For a tree $T$ and its vertex $u$,  let  $L(u)$ be the set of leaves adjacent to $u$. Let  $L(T)  = \max \{ |L(u)|\,:\,u \in V(T)\}$.
We shall prove for $n \geq 3$ and any  tree $T$  on $n$ vertices, that  $s^*(T) \leq   2n-2-L(T)$ by induction on $n$. Note that this bound is sharp for stars.

The case $n=3$ holds vacuously since $T$ is a star.    
Suppose the statement holds for $n\geq 3$, and suppose $T$ has $n+1$ vertices and is not a star. Choose a vertex $u$ for which  $L(T) = |L(u)|$,  choose a leaf  $v$ adjacent to $u$, and let $T^*=T-v$.  Then   $L(T)-1 = |L(u) |-1  \leq  L(T^*)$. By induction there is a vertex sum-distinguishing labeling $f : V(T^*) \rightarrow [2n -2-L(T^*)]$ of  $T^*$. 
Observe that
$$
2n- 2- L(T^*) \leq 2n - 2 - (L(T) -1) = 2n -1 - L(T) < 2n - L(T) = 2(n+1) -2 - L(T)\;.
$$
We define a labeling $f' : V(T) \rightarrow [2n-L(T)]$ such that $f'(u) = f(u)$, $u\in V(T^*)$, $f'(v)=\xi$. 

We argue that we can find an appropriate $\xi$ so that the labeling $f'$ does not contain bad pairs, i.e., pairs of vertices $y, y'$ such that $N[y]\neq N[y']$ but $s^*_{f'}(y) = s^*_{f'}(y')$. Since there are no bad pairs in $T^*$ under $f$, we have that $y, y'$ is not a bad pair if $y, y'\in T-\{u, v\}$.
Thus we need to consider only the pairs $y, y'$, where $y\in \{u, v\}$. Let $L= L(u)$ in $T$.

If $y=u$ and $y'\in L$, we see that $s^*_{f'}(u)> s^*_{f'}(y')$ regardless of $\xi$. Thus such a pair $y, y'$ is not bad.
A pair $y=u, y'=u'$, $u'\in V(T^*)-(\{u\}\cup L)$ can be bad if $s^*_{f'}(u) = s^*_f(u)+\xi = s^*_{f}(u')$.
A pair $y=v, y'=u'$, $u'\in V(T^*)-u$ can be bad if $s^*_{f'}(v) = f(u)+\xi = s^*_{f}(u')$.
Thus, if $\xi \not\in X$, where
$$
X= \{ s^*_{f}(u')-s^*_f(u): u'\in V(T^*)-(\{u\} \cup L)\} \cup \{ s^*_{f}(u')-f(u): u'\in V(T^*)-u\}\;,
$$
then $f'$ has no bad pairs on $T$. Note that $|X| \leq  (n-1) -|L| + (n-1) = 2n -|L| -2$. 
Thus, there is an available choice for $\xi$ in $[2n-|L|]-X$.
\qed

\subsection{Proof of Theorem \ref{t:irregular}}

Let $H$ be the hypergraph from Lemma \ref{l:random} and let $H^*$ be the dual hypergraph of $H$, so
$irr(H^*) = s(H)$. Let $n$ be the number of vertices of $H^*$ that is the number of edges in $H$, i.e.
$n = |E(H)| = \Theta(N^{(r+1)/2}\sqrt{\log N}) $. 
We also have that $s(H) = c N^r$, so for $ \epsilon < 2/(r+1)$ we have
$$
irr(H^*) \geq c N^r \geq C n^{2r/(r+1)}/\log^r n \geq n^{2 - \epsilon}\;.
$$
\qed

\section{Concluding remarks and open problems}

As mentioned in the introduction, there are connections between the considered problem and Sidon sets.
Recall that a $B_h[1]$-Sidon set is a set $X$ of integers such that for any integer $q$, there is at most one subset $X'$ of $X$, $|X'|=h$,  so that the sum of elements from $X'$ is $q$. The following bounds on the sizes of Sidon sets are known: if $X\subseteq [K]$ and $X$ is a $B_h[1]$-Sidon, then
$|X|\leq (h\cdot h! K)^{1/h}(1+o(1))$,  see for example \cite{P,R}. Let $c(h)$ be a constant depending on $h$ only such that $|X|\leq (c(h)K)^{1/h}(1+o(1))$ for any $B_h[1]$-Sidon set $X$, $X\subseteq [K]$. Consider a hypergraph $H$ that is a union of a complete $h$-uniform hypergraph on $N$ vertices and $\binom{N}{h} - N$  isolated vertices. Then $H$ has the same number $n = N^h/h! (1+o(1))$ of vertices and edges and $s(H) \geq  (1/c(h))^h N^h = \Theta(n)$. So, this only gives a linear lower bound on $s(n)=s(n,n)$, much weaker than Theorem \ref{t:snm}.

Note that a similar problem defined on open neighbourhoods of the vertices of a graph is equivalent to the setting we considered on the complement ${\overline G}$ of the graph $G$.  Indeed, if $f$ is a vertex sum-distinguishing labeling of $G$, then the numbers $\sum_{u\not\in N[v]} f(u)$, $v\in V(G)$  are distinct for any two vertices with distinct open neighbourhoods. We see that $V(G)-N[v] = N_{{\overline G}}(v)$, thus the sums considered correspond to the sums over open neighbourhoods in the complement.

Yet another variant of $s(H)$ is its restriction to {\em injective} labelings. Denoting the corresponding parameters by $s_{inj}(H)$ and $s_{inj}(n,m)$ we see, by definition, that $s_{inj}(H) \ge s(H)$
so $s_{inj}(n,m) \ge s(n,m)$. 
If a hypergraph $H'$ is a union of $H$ and all hypergedges consisting of exactly one vertex of $H$,  then $s_{inj}(H) \le s(H')$, and  $H'$ has at most $|E(H)|+|V(H)|$ edges. Thus, Theorem \ref{t:snm} trivially extends to $s_{inj}(n,m)$.
By defining $s^*_{inj}(n)$ similarly and following the steps of Theorem \ref{t:Delta}, one can also show  that $s^*_{inj}(G) \leq (\Delta +2)n$, for any graph $G$ on $n$ vertices and maximum degree $\Delta$.

In this paper, we addressed hypergraphs on $n$ vertices and $m$ hyperedges, for $n\leq m \leq n^{O(1)}$.
It may be of some interest to determine the behavior of $s(n,m)$ when $m$ is larger than a polynomial function of $n$. As mentioned in the introduction, the closer $m$ gets to $2^n$, the closer $s(n,m)$ gets to be linear in $m$.

Finally, it may be of some interest to improve the upper bound $s^*(G) \leq n(\Delta+1)$ given in Theorem \ref{t:Delta} for regimes of $\Delta$ that are significantly less than quadratic.

%\end{linenumbers}

\end{document}